\title[Fractionally balanced hypergraphs]
{fractionally balanced hypergraphs and rainbow KKM theorems}
\author[Aharoni]{Ron Aharoni} \thanks{R. Aharoni:
Department of Mathematics, Technion, Israel and MIPT, Dolgoprudny, Russia.
\url{ra@tx.technion.ac.il}.
R. Aharoni was Supported by the Israel Science Foundation (ISF) grant no.\ 2023464 and the Discount Bank Chair at the Technion. This paper is part of a project that has received funding from the European Union's Horizon 2020 research and innovation programme under the Marie Sklodowska-Curie grant agreement no.\ 823748.
This work was supported by the Russian Federation Government in the framework of MegaGrant no. 075-15-2019-1926 
 when Ron Aharoni worked on Sections 6 through 8 of the paper.
}
\author[Berger]{Eli Berger} \thanks{E. Berger: Department of Mathematics, University of Haifa,  Israel. \url{berger@math.haifa.ac.il}.}
\author[Briggs]{Joseph Briggs} \thanks{J. Briggs: Department of Mathematics and Statistics, Auburn University, USA. \url{jgb0059@auburn.edu}}
\author[Segal-Halevi]{Erel Segal-Halevi} \thanks{E. Segal-Halevi:
Department of Computer Science, Ariel University, Israel. 
\url{erelsgl@gmail.com}.
Erel Segal-Halevi was supported by the Israel Science Foundation (grant no. 712/20).}
\author[Zerbib]{Shira Zerbib}\thanks{S. Zerbib: Department of Mathematics, Iowa State University, USA.  \url{zerbib@iastate.edu}. S. Zerbib was supported by NSF grant DMS-1953929. \\ R. Aharoni, E. Berger and S. Zerbib are supported by BSF grant no. 2016077.}
\newtheorem{theorem}{Theorem}[section]
\newtheorem*{theorem*}{Theorem}
\newtheorem{proposition}[theorem]{Proposition}
\newtheorem{lemma}[theorem]{Lemma}
\newtheorem{observation}[theorem]{Observation}
\newtheorem{corollary}[theorem]{Corollary}
\newtheorem{claim}[theorem]{Claim}
\newtheorem{conjecture}[theorem]{Conjecture}
\theoremstyle{definition}
\newtheorem{definition}[theorem]{Definition}
\newtheorem*{definition*}{Definition}
\newtheorem{notation}[theorem]{Notation}
\theoremstyle{remark}
\newtheorem{remark}[theorem]{Remark}
\newtheorem{question}[theorem]{Question}
\newtheorem{example}[theorem]{Example}
\newtheorem{convention}[theorem]{Convention}
\newcommand{\cf}{\mathcal{F}}
\newcommand{\cp}{\mathcal{P}}
\newcommand{\F}{\mathcal{F}}
\newcommand{\ca}{\mathcal{A}}
\newcommand{\cj}{\mathcal{J}}
\newcommand{\cv}{\mathcal{V}}
\newcommand{\vj}{\vec{j}}
\newcommand{\U}{\mathcal{U}}
\newcommand{\R}{\mathbb{R}}
\newcommand{\Z}{\mathbb{Z}}
\newcommand{\bh}{\mathcal{BH}}
\newcommand{\C}{\mathcal{C}}
\newcommand{\cc}{\mathcal{C}}
\newcommand{\M}{\mathcal{M}}
\newcommand{\ci}{\mathcal{I}}
\newcommand{\cm}{\mathcal{M}}
\newcommand{\dnm}{\Delta_{n-1}}
\newcommand{\ad}{\textsc{ad}}
\newcommand{\adu}{\textsc{adu}}
\newcommand{\bm}{\textsc{bm}}
\newcommand{\im}{\textsc{im}}
\newcommand{\floor}[1]{\left\lfloor #1 \right\rfloor}
\newcommand{\ceil}[1]{\left\lceil #1 \right\rceil}
\newcommand{\conv}{\operatorname{conv}}
\newcommand{\supp}{\operatorname{supp}}
\definecolor{ForestGreen}{rgb}{.13,.54,.13}
\newcommand{\er}[1]{\textcolor{ForestGreen}{#1}}
\newcommand{\erel}[1]{\textcolor{ForestGreen}{(\textbf{Erel says:} #1)}}
\newcommand{\joe}[1]{\textcolor{purple}{(\textbf{Joe says:} #1)}}
\newcommand{\HiddenExplanation}[1]{}
\numberwithin{equation}{section}
\begin{document}

\begin{abstract}
A $d$-partite hypergraph is called {\em fractionally balanced} if there exists a non-negative,  not identically zero, function on its edge set that has constant  degrees in each vertex side. 
 Using a topological version of Hall's theorem we prove 
 lower bounds on the matching number of such hypergraphs. These bounds yield 
  rainbow versions of the KKM theorem for products of simplices, which in turn are used to obtain some results on 
 multiple-cake division,  and on rainbow matchings in families of $d$-intervals. 
   \end{abstract}

\maketitle
\section{Introduction}
In bipartite graphs, the existence of a perfect fractional matching implies the existence of a perfect matching \cite{konig1916graphok}. Otherwise put, a non-empty regular bipartite graph has a perfect matching. This is not true for $d$-partite hypergraphs, $d>2$. 
\begin{wrapfigure}{r}{2cm}
\begin{center}
\def\height{0.5}
\begin{tikzpicture}[
scale=3,
]
\node[draw,ellipse] (a1) at (0,0) {$a_0$};
\node[draw,ellipse] (a2) at (0.4,0) {$a_1$};
\node[draw,ellipse,below=\height of a1] (b1)  {$b_0$};
\node[draw,ellipse,below=\height of a2] (b2)  {$b_1$};
\node[draw,ellipse,below=\height of b1] (c1) {$c_0$};
\node[draw,ellipse,below=\height of b2] (c2)  {$c_1$};
\draw [ultra thick] (a1) -- (b1) -- (c1);
\draw [ultra thick] (a1) -- (b2) -- (c2);
\draw (a2) -- (b1) -- (c2);
\draw (a2) -- (b2) -- (c1);
\end{tikzpicture}
\end{center}
\end{wrapfigure}
For example, the famous Pasch hypergraph, a $2 \times 2 \times 2$ hypergraph on $A \times B \times C$, with edge set $\{(a_i, b_j, c_k)\mid i+j+k\equiv 0 ~(\bmod~ 2)\}$, is regular and has matching number $1$ (namely, it is intersecting).  F\"uredi  showed that in a sense this is the worst case. The following is a special case of a theorem of F\"urdei \cite{furedi1981maximum}, relating the matching and fractional matching numbers of a $d$-regular hypergraph (these terms are defined below): 

\begin{theorem}\label{furedi}
A regular $d$-partite hypergraph with sides of size $n$ has a matching of size at least $\frac{n}{d-1}$.
In particular, a regular $3$-partite $3$-uniform $n \times n \times n$ hypergraph has a matching of size at least $\frac{n}{2}$.
\end{theorem} 

   Motivated by problems on fair division of multiple cakes (see Section \ref{sec:cakes} below) we wish to prove similar  results on $3$-partite hypergraphs with not necessarily equal size sides. We shall also study the   case of $d$-partite hypergraphs, for general $d$. 
When the sides are not of equal size the hypergraph cannot be regular, but it can be regular on each side separately. If this is true for $H$ after possibly duplicating some of its edges, we call it ``fractionally balanced''. Equivalently, a $d$-partite hypergraph is fractionally-balanced if there exists a system of non-negative weights on its edges, not all zero, with constant degrees on the vertices in every side.%
\footnote{
The equivalence is obvious if the weights are restricted to be rational.
Allowing irrational weights does not change the set of fractionally-balanced hypergraphs.
This is because the set of balanced weight functions is a cone in $\mathbb{R}^E$ defined by hyperplanes with rational coefficients. Therefore, if it contains a non-zero point, it also contains a non-zero rational point.
}

Let us first cement notation. A hypergraph will be identified with its edge set. A {\em matching} in a hypergraph $H$ is a set of disjoint edges. 
The {\em matching number} $\nu(H)$ is the largest size of a matching in $H$. A {\em fractional matching} in $H$ is a non-negative function $f$ on $H$ such that for every vertex $v$, 
$$\deg_f(v) := \sum_{e\in H,e\ni v} f(e) \le 1.$$ The {\em fractional matching number} $\nu^*(H)$ is the maximum of  $|f|:=\sum_{e\in H} f(e)$ over all fractional matchings $f$ of $H$. 

A hypergraph $H$ is {\em $d$-partite} if its vertex set can be partitioned as $V_1\cup \dots \cup V_d$, in such a way that $|e\cap V_t| = 1$  for every $e\in H$ and  $t\in [d]$. 
We are tacitly assuming that the partition is given and fixed, even though there may be more than one partition satisfying the condition. 
The sets $V_t$ are called the {\em  sides} of $H$. 

\begin{definition}[Fractionally balanced hypergraphs]\label{def:balanced}
Given a $d$-partite hypergraph $H$ with sides $V_1,\ldots,V_d$:

(a) A function $f: H \to \mathbb{R}_{\geq 0}$ is said to be {\em  balanced } if 
it has constant degrees  on every side $V_t$, namely $\deg_f(v)=|f|/|V_t|$ for all $v \in V_t$, for all $t\in[d]$.

(b) $H$ is called $(n_1, \ldots,n_d)$-{\em fractionally balanced} if $|V_t|=n_t$ for all $t\in[d]$, and there exists a balanced nonzero function $f: H \to \mathbb{R}_{\geq 0}$.
\end{definition}

\begin{example}

(a) In a bipartite graph with $n$ vertices in each side, every perfect fractional matching is a balanced weight function; therefore, every graph admitting such a matching is $(n,n)$-fractionally-balanced.

(b) 
The Pasch hypergraph shown above is $(2,2,2)$-fractionally balanced, by a weight function assigning a weight of $1$ to every edge. 
Every complete tripartite hypergraph is fractionally-balanced by a similar weight function.

(c) A slightly less trivial example is shown below. It is $(3,4,4)$-fractionally balanced, by assigning a weight of $1/2$ to every thick edge and $1/4$ to every thin edge.

\begin{center}
\def\aline{1}
\def\bline{0.53}
\def\cline{0}
\def\height{0.7}
\begin{tikzpicture}[scale=3]
\node[draw,ellipse] (a1) at (0.0,\aline) {$a_1$};
\node[draw,ellipse] (a2) at (0.4,\aline) {$a_2$};
\node[draw,ellipse] (a3) at (0.8,\aline) {$a_3$};
\node[draw,ellipse,below=\height of a1] (b1)  {$b_1$};
\node[draw,ellipse,below=\height of a2] (b2)  {$b_2$};
\node[draw,ellipse,below=\height of a3] (b3)  {$b_3$};
\node[draw,ellipse] (b4) at (1.2,\bline)  {$b_4$};
\node[draw,ellipse,below=\height of b1] (c1) {$c_1$};
\node[draw,ellipse,below=\height of b2] (c2)  {$c_2$};
\node[draw,ellipse,below=\height of b3] (c3)  {$c_3$};
\node[draw,ellipse,below=\height of b4] (c4)  {$c_4$};
\draw [ultra thick] (a1) -- (b1) -- (c2);
\draw [ultra thick] (a1) -- (b2) -- (c1);
\draw [ultra thick] (a2) -- (b3) -- (c4);
\draw [ultra thick] (a2) -- (b4) -- (c3);

\draw (a3) -- (b1) -- (c1);
\draw (a3) -- (b2) -- (c2);
\draw (a3) -- (b3) -- (c3);
\draw (a3) -- (b4) -- (c4);
\end{tikzpicture}
\end{center}
\end{example}

\HiddenExplanation{
	\begin{observation}\label{obs:nu*}
	If $H$ is an $(n_1, \ldots ,n_d)$-fractionally balanced hypergraph then $\nu^*(H)=\min_{t \in [d]}n_t$.
	\end{observation}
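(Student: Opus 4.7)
The plan is to establish the two inequalities separately.

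For the upper bound $\nu^*(H) \le \min_{t} a_t$, I would use only the $d$-partiteness. For any fractional matching $g$ and any side index $t$, since each edge $e$ satisfies $|e \cap V_t| = 1$, double counting gives
\[
|g| = \sum_{e \in H} g(e) = \sum_{v \in V_t} \deg_g(v) \le |V_t| = a_t,
\]
using $\deg_g(v) \le 1$ in the last step. Taking the minimum over $t$ yields $|g| \le \min_t a_t$, and then taking the supremum over $g$ gives the upper bound.

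For the lower bound $\nu^*(H) \ge \min_t a_t$, I would produce an explicit fractional matching by rescaling the balanced function $f$ guaranteed by the hypothesis. Let $c_t$ denote the constant value of $\deg_f$ on $V_t$. The same double-counting identity applied to $f$ gives $|f| = a_t c_t$ for each $t$, so $c_t = |f|/a_t$. Since $f$ is non-zero and non-negative, $|f| > 0$ and every $c_t$ is positive; in particular $\max_t c_t = |f|/\min_t a_t$. Now set $g := f / \max_t c_t$. Then $g$ is non-negative, and for every vertex $v \in V_t$ one has $\deg_g(v) = c_t/\max_s c_s \le 1$, so $g$ is a fractional matching. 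Its total weight is
\[
|g| = \frac{|f|}{\max_t c_t} = \frac{|f|}{|f|/\min_t a_t} = \min_t a_t,
\]
which proves the desired lower bound.

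There is no real obstacle here; the only thing to be slightly careful about is the use of non-zeroness of $f$ to ensure that $\max_t c_t > 0$ so that the normalization is well defined, and the bookkeeping identity $|f| = a_t c_t$ that is the engine of both inequalities. Together the two bounds give $\nu^*(H) = \min_t a_t$.
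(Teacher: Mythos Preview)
Your argument is correct and is essentially the same as the paper's: you rescale the balanced weight function $f$ by its maximum degree so that all degrees become at most $1$, obtaining a fractional matching of total weight $\min_t a_t$, and you supply the easy double-counting upper bound that the paper leaves implicit.
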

	Let $i := \arg\min_{i} n_i$.
	Scale the function $f$ such that $\deg_f(v)=1$ for all $v\in V_i$. Then $\deg_f(v)\leq 1$ for all $v\in V$, so $f$ is a fractional matching and $|f|=n_i$.
}

\begin{notation}
Let $\bm(n_1, n_2, \ldots ,n_d)$ denote the largest integer $m$ such that 
every $(n_1, n_2, \ldots ,n_d)$-fractionally balanced  hypergraph contains a matching of size $m$.
\end{notation}
The two results quoted above, about bipartite and $3$-partite hypergraphs, say, in this notation, that $\bm(n, n) \geq n$ and 
$\bm(n, n, n) \geq \lceil\frac{n}{2}\rceil$.

We shall prove:
\begin{theorem}
\label{thm:tri-lower}
For every
positive integers $n_1\leq n_2\leq n_3$:
\begin{align*}
(a)&&
\bm
(n_1, n_2, n_3)\geq 
\min\bigg(
n_1,~
\ceil{\frac{\floor{2 n_3/n_2} n_2 }{\floor{2 n_3/n_2}+2}}
\bigg)
\\
(b)&&
\bm
(n_1, n_2, n_3)\geq 
\min\bigg(
n_1,~
\ceil{\frac{2 n_3}{\ceil{2 n_3/n_2}+2}}
\bigg)
\end{align*}
In particular, if $n_3/n_2$ is an integer, then
\begin{align*}
\bm
(n_1, n_2, n_3)
&\geq 
\min\bigg(
n_1,~
\ceil{\frac{1}{1/n_2+1/n_3}}
\bigg).
\end{align*}
\end{theorem}

This implies Theorem \ref{furedi}, as well as some more corollaries:
\begin{corollary}
\label{summarybm}
For all $n\geq 2$:
\begin{enumerate}
\item \label{n2-n2} $\bm(n,n,n^2-n/2) ~\geq~ n$
when $n$ is even.
\item $\bm (n,n,{n\choose 2}) ~\geq~ n-1$.
\item $\bm(n,n,2n-1) ~\geq~
\max\left(
\ceil{\frac{2 n - 1}{3}},
\ceil{\frac{3 n }{5}}
\right)$.
\item $\bm(n, 2n-1,2n-1) ~\geq~ n$.
\item $\bm(k,n,n) ~\geq~ \min(k,\ceil{\frac{n}{2}})$ for all $k\geq 1$.
\end{enumerate}
\end{corollary}
The proof of Theorem \ref{thm:tri-lower} uses topological tools which are presented in Section \ref{sec:tools}.
The proof itself is presented in Section \ref{sec:tripartite}.

Section \ref{sec:upperbound-bm} provides upper bounds on $\bm$,
in which two of the parts have equal size $n$ and the third is of a different size $k$. When $2\leq k \leq n$, we have:
\begin{theorem}\label{thm:tri-upper-knn}
\begin{align*}
\bm(k,n,n)\le 
\begin{cases}
\min(k,\floor{\frac{k}{2}+\frac{n}{4}}) & \text{If $n$ is even;}
\\
\min(k,\floor{\frac{k}{2}+\frac{n+3}{4}}) & \text{If $n$ is odd;}
\\
\min(k,\ceil{\frac{n}{2}}) & \text{If $k-\floor{\frac{n}{2}}$ divides $\floor{\frac{n}{2}}$.}
\end{cases}
\end{align*}
\end{theorem}

\begin{theorem}\label{thm:main_negative}

 If $r\geq 1$ and $n\leq k\leq rn$  then:

\begin{align*}
\bm(n,n,k) \leq \left\lceil 2r n/(2r+1) \right\rceil.
\end{align*}
\end{theorem}

Putting $r=\frac{n-1}{2}$ and $r=2$, respectively, yields: 
\begin{corollary}\hfill
\label{cor:bm-upper-bound}
\begin{enumerate}
\item 
\label{item:nchoose2}
$\bm(n,n, \binom{n}{2}) \leq n-1$, 
\HiddenExplanation{
	$r = (n-1)/2;
	rn = {n\choose 2} --- integer;
	n'  = \frac{n(n-1)}{n} = n-1 --- integer.
	$
}
\item 
\label{item:2n-1}
$\bm(n,n,2n) \leq \ceil{4n/5}$, and 
$\bm(n,n,2n-1) \leq \ceil{4n/5}$.
\HiddenExplanation{
	$r = (2n-1)/n;
	n'  = \frac{4n-2}{5n-2} n \leq \frac{4}{5}n$.
}
\end{enumerate}
\end{corollary}
%
It would be  of interest to close  the gap between the upper bound of $\sim 4n/5$ from Corollary \ref{cor:bm-upper-bound}(2) and the lower bound of $\sim 2n/3 $ from Corollary \ref{summarybm}(3). 

A more general open problem is to find nontrivial upper bounds on $\bm$ when all 3 arguments are distinct.

It is unlikely that $\bm$ is monotone in general (see Section \ref{sec:monotonicity}), and it is unclear whether any  of the
above bounds is tight.
We conjecture that  Corollary \ref{cor:bm-upper-bound}(2) is tight:
\begin{conjecture}
If $n \geq 4$ then $\bm(n,n,\binom{n}{2}+1) \geq n$.
\end{conjecture}


Section \ref{sec:higher-dimensions} presents an extension to $d$-partite hypergraphs for $d > 3$.

Our original  motivation for studying the $\bm$ function was proving results on cake division. We shall prove that if $\bm(n_1, n_2, \ldots ,n_d) \geq m$ then, when $d$ cakes are cut into respectively $n_t$ parts, $t\le d$, at least $m$ ``players'' can be allocated multiple parts of cakes fulfilling their pre-fixed requirements (these notions will be defined in Section \ref{sec:cakes}). 
These are re-formulations of  problems about multidimensional KKM theorems, which are described in Section \ref{sec:kkm}.

\section{Topological tools}
\label{sec:tools}

A hypergraph $\C$ is called a {\em simplicial complex} if  it is closed down, namely,
$f \subseteq e$ and $e \in \C$ imply $f \in \C$.
 For a subset $X$ of $V(\C)$ let $\C[X]=\{e \in \C \mid e \subseteq X\}$. 
  $\cc$ is called {\em homologically 
$k$-connected}  if for every $-1 \le j \le k$, the $j$-th reduced simplicial homology group of $\cc$ with rational coefficients $\tilde{H}_j(\cc)$ vanishes.

There is also a homotopic version: 
$\cc$ is called {\em homotopically 
$k$-connected}  if for every $-1 \le j \le k$, every continuous function 
$f: ||C|| \to S^j$ can be extended to a function $\tilde{f} \to B^{j+1}$.

%
%

The  \emph{homological (resp. homotopic)
connectivity} $\eta(\cc)$ (resp. $\eta_h(\cc)$)
of $\cc$ is the largest $k$ for which $\cc$ is homologically (resp. homotopically)
$k$-connected, plus $2$.%
\footnote{
$\tilde{H}_{-1}(\cc)$ is defined as the trivial group.
Therefore, if $\tilde{H}_{0}(\cc)$ is already non-trivial, then 
$\eta(\cc) = (-1)+2 = 1$,
which is its smallest possible value.
}

It is known that $\eta \ge \eta_h$, with equality if $\eta_h \ge 3$ (namely if the complex is simply connected).  
This follows from a theorem of Witold Hurewicz \citep{hatcher2002algebraic}[p.366,Thm.4.32].

Given sets $\cv := (V_1,\ldots,V_n)$, a \emph{$\cv$-transversal} is a function $f:\cv\to \cup_{i=1}^n V_i$ such that $f(V_i)\in V_i$ for all $i$.
Given a simplicial complex $\cc$ with $V(\cc)=\cup_{i=1}^n V_i$, a 
\emph{$\cc$-$\cv$-transversal} is a $\cv$-transversal $f$ whose image 
$f(\cv)$ 
is an element of $\cc$.
The theorem below gives a sufficient condition for existence of a $\cc$-$\cv$-transversal.
For a set $K \subseteq \{1,\ldots,n\}$, we denote $V_K:=\bigcup_{i\in K}V_i$.

\begin{theorem}[Topological Hall Theorem]\label{ah}
Let $\cc$ be simplicial complex and $\cv := \{V_1,\ldots,V_n\}$ subsets of $V(\cc)$.
If $\eta(\cc[V_K]) \ge |K|$ for every $K \subseteq
[n]$, then there exists 
a $\cc$-$\cv$-transversal.
\end{theorem}
The topological Hall theorem was essentially proved in \cite{ah}.
In the above form it was noted by the first author, as attributed in \cite{me2}.

A standard argument of adding dummy vertices yields a  deficiency version of Theorem \ref{ah}.

\begin{theorem}\label{deficiency}
Let $\cc$ be a simplicial complex, $\cv := \{V_1,\ldots,V_n\}$ subsets of $V(\cc)$, and $d\geq 0$ an integer.
If $\eta(\cc[V_K]) \ge |K|-d$ for every $K \subseteq [n]$,
then there exists a $\cc$-$\cv'$-transversal for some subset $\cv'\subseteq \cv$ with  $|\cv'| \geq n-d$.
\end{theorem}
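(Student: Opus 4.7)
The plan is the standard dummy-vertex trick: augment the system with $d$ auxiliary ``wildcard'' vertices, apply Theorem \ref{ah} to the enlarged system (where Hall's condition is trivially satisfied), and then discard the dummies to recover a partial transversal of the original system.

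First I would introduce a set $D = \{w_1, \ldots, w_d\}$ of $d$ new vertices disjoint from $V(\cc)$, and form the enlarged complex $\cc'$ defined as the simplicial join of $\cc$ with the full simplex on $D$; concretely, a face of $\cc'$ is any set of the form $f \cup S$ with $f \in \cc$ and $S \subseteq D$. Enlarge each side to $V_i' := V_i \cup D$ for every $i \in [n]$.

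Next I would verify Hall's condition for $(\cc', V_1', \ldots, V_n')$. For every $K \subseteq [n]$, the restriction $\cc'[V_K']$ coincides with the join of $\cc[V_K]$ with the full simplex on $D$; since any join with a contractible complex is contractible, we have $\eta(\cc'[V_K']) = \infty \geq |K|$. Thus Theorem \ref{ah} applies and yields a $\cc'$-transversal, namely a face $T \in \cc'$ meeting every $V_i'$.

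Finally, I would decompose $T = T_0 \sqcup (T \cap D)$ where $T_0 := T \cap V(\cc)$. The join structure of $\cc'$ forces $T_0 \in \cc$, and clearly $|T \cap D| \leq |D| = d$. Choosing the transversal $T$ so as to minimise $|T \cap D|$, a short exchange argument shows that the number of original sides $V_i$ not met by $T_0$ is at most $|T \cap D| \leq d$; consequently $T_0$ represents at least $n - d$ of the sides $V_1, \ldots, V_n$, giving the required partial $\cc$-transversal.

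The hard part will be this last bookkeeping step, namely translating the cardinality bound $|T \cap D| \leq d$ into a bound on the number of sides covered only by dummies. The ``not necessarily injective'' nature of a $\cc$-transversal in principle permits many sides to share a single dummy, and ruling this out requires the minimality of $|T \cap D|$ together with an exchange argument exploiting the join structure of $\cc'$.
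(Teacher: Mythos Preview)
Your setup makes the Hall condition of Theorem~\ref{ah} hold \emph{vacuously}: since $\cc'[V_K'] = \cc[V_K] * 2^D$ is a cone over $w_1$, its connectivity is infinite regardless of any hypothesis on $\cc$. Correspondingly the conclusion of Theorem~\ref{ah} is vacuous too---the constant map $\phi(i)=w_1$ is already a $\cc'$-transversal, obtainable with no assumption whatsoever. The real content has therefore been pushed entirely into your ``short exchange argument'', which must somehow use $\eta(\cc[V_K])\ge|K|-d$ to upgrade an arbitrary (possibly all-dummy) $\cc'$-transversal to one in which at most $d$ sides rely on dummies; but that statement is precisely Theorem~\ref{deficiency} itself. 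Minimising $|T\cap D|$ does not help: with $d=1$ the singleton $T=\{w_1\}$ already achieves $|T\cap D|=1$, and this may well be the minimum (it is whenever no full $\cc$-transversal exists), yet $T_0=\emptyset$ represents no side at all. The non-injectivity you correctly flag is the whole difficulty, and it is not resolved by minimality together with the join structure.

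One way to make the dummy trick genuine is to arrange the dummies so that the augmented Hall condition uses the hypothesis nontrivially and so that no face of $\cc'$ can serve more than $d$ sides with dummies. Give each side its own block $D_i$ of $d$ fresh vertices, set $V_i'=V_i\cup D_i$, and take $\cc'$ to be the join of $\cc$ with the complex on $\bigsqcup_i D_i$ whose faces are those sets meeting at most $d$ of the blocks. Restricted to $V_K'$, this second factor is homotopy equivalent to the $(d{-}1)$-skeleton of the simplex on $K$, which has $\eta=d$; hence $\eta(\cc'[V_K'])\ge(|K|-d)+d=|K|$, and now Theorem~\ref{ah} applies nontrivially. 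Any resulting transversal lies in a face of $\cc'$, which by construction meets at most $d$ of the blocks $D_i$; since the blocks are pairwise disjoint, at most $d$ indices $i$ can have $\phi(i)\in D_i$, and the remaining $\ge n-d$ indices yield the desired partial $\cc$-transversal.
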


\HiddenExplanation{
NOTE: The above theorems *cannot* guarantee an injective function.
To see this, let $\cc$ be the complete simplicial complex (= a ball) on $n$ vertices. In this case, the connectivity of $\cc[V_K]$ for all $K$ is infinite, so both theorems imply the existence of a $\cc$-$\cv$ transversal, for any $\cv$. 
But if $|\cv| > |V(\cc)|$, the transversal obviously cannot be injective.
}

We shall apply these theorems to independence complexes of graphs. The independence complex $\ci(G)$ of a graph $G$ consists of the independent sets in $V(G)$ (a set is independent if it does not contain an edge of $G$). The {\em line graph} $L(G)$ of a graph $G$ has $E(G)$ as its vertex set, and two edges in $E(G)$ form an edge in $L(G)$ if they intersect. Clearly, an independent set in $L(G)$ is a matching in $G$, so $\ci(L(G))$ is the matching complex of $G$, usually denoted $\cm(G)$.

A hypergraph $H$ is called {\em bipartite} with sides $X,Y$ (playing asymmetric roles) if $V(H)=X\cup Y$, $X \cap Y=\emptyset$ and  $|e\cap X|=1$ for all $e\in H$.%
\footnote{
Note that a bipartite hypergraph is not the same as a $d$-partite hypergraph with $d=2$ (every $d$-partite hypergraph is bipartite, but the opposite is not necessarily true).
}
We shall apply Theorems \ref{ah} and \ref{deficiency} to such hypergraphs. 
For every $x \in X$ let $N_H(x)$ be the neighborhood of $x$ in $Y$, namely $\{f\subseteq Y \mid f\cup\{x\} \in H\}$. 
For a subset $K$ of $X$, let $N_H(K) := \biguplus_{x \in K}N(x)$.
The $\biguplus$ means that 
we treat $N_H(K)$ as a multi-set (and a multi-hypergraph): identical neighbors of two elements of $K$ induce two elements in $N_H(K)$. 
If $H$ is a $d$-partite hypergraph, then $N_H(K)$ is a $(d-1)$-partite multi-hypergraph.

Applied to this setting, Theorem \ref{deficiency} yields:
\begin{corollary}
\label{cor:topological-hall}
Let $H$ be a bipartite hypergraph with sides $X,Y$ and $d\geq 0$ an integer.
 If $\eta(\cm(N_H(K))) \ge |K|-d$ for every $K \subseteq
X$, then $H$ has a matching of size $|X|-d$.
\end{corollary}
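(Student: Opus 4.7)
My plan is a direct invocation of the deficiency version of the topological Hall theorem, applied to the matching complex of $N_H(X)$.

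Set $\cc := \cm(N_H(X))$, the matching complex of the multi-hypergraph $N_H(X) = \biguplus_{x \in X} N_H(x)$. Its vertices are the edges of $N_H(X)$, where I insist on the multi-set convention from the excerpt: each $f \in N_H(x)$ is tagged by the $x$ it came from, so the subsets $V_x := N_H(x) \subseteq V(\cc)$ (one per $x \in X$) are pairwise disjoint. Under this convention, for any $K \subseteq X$ we have $V_K = \bigcup_{x \in K} V_x = E(N_H(K))$, and the induced subcomplex $\cc[V_K]$ consists precisely of matchings in $N_H(K)$; that is, $\cc[V_K] = \cm(N_H(K))$. The hypothesis of the corollary therefore reads exactly $\eta(\cc[V_K]) \ge |K| - d$ for every $K \subseteq X$, which is the hypothesis of Theorem \ref{deficiency} with $n = |X|$.

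Applying Theorem \ref{deficiency} produces a partial $\cc$-transversal of size $|X| - d$: a subset $X' \subseteq X$ with $|X'| = |X| - d$ and a selection $f_x \in N_H(x)$ for each $x \in X'$ such that the collection $\{f_x : x \in X'\} \in \cc$, i.e., is a matching in $N_H(X)$. Since the $V_x$ are disjoint by construction and the selection picks one vertex from each $V_x$ with $x \in X'$, there are $|X'|$ distinct labeled edges in the transversal, and being a matching in $N_H(X)$ means that the underlying subsets $f_x \subseteq Y$ are pairwise disjoint. Then the edges $\{x\} \cup f_x \in H$, for $x \in X'$, are pairwise disjoint in $H$ (distinct $X$-parts, disjoint $Y$-parts), giving a matching of size $|X| - d$ in $H$.

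The only subtle point — and the one that most warrants care rather than real difficulty — is the multi-set bookkeeping: if one forgot the tagging, the $V_x$'s could overlap and a transversal could legitimately pick the same underlying $f \subseteq Y$ for two different $x$'s, producing $H$-edges that share $f$ and hence fail to form a matching. The $\biguplus$ convention already fixed in the text sidesteps this, and once the identification $\cc[V_K] = \cm(N_H(K))$ is recorded the corollary is an immediate translation of Theorem \ref{deficiency}.
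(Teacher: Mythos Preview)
Your proof is correct and follows essentially the same route as the paper: set $\cc=\cm(N_H(X))$, take $V_x=N_H(x)$, identify $\cc[V_K]=\cm(N_H(K))$, apply Theorem~\ref{deficiency}, and lift the resulting partial transversal to a matching in $H$. Your explicit handling of the $\biguplus$ tagging (forcing the $V_x$ to be disjoint so that the transversal really yields $|X|-d$ distinct, pairwise-disjoint $H$-edges) is, if anything, more careful than the paper's own sketch.
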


\HiddenExplanation{
Here is how the corollary follows from the theorem.
\begin{itemize}
\item The complex $\cc$ is $\cm(N_H(X)) = \ci(L(N_H(X)))$.
\item The vertex set $V(\cc)$ is $N_H(X)$. 
\item The subsets in $\cv$ are $V_x := N_H(x)$ for each $x\in X$
(they are not necessarily disjoint --- different vertices of $X$ may have the same neighbors).
\item 
For any $K\subseteq X$ we have $\cc[V_K] = \ci(L(N_H(K))) = \cm(N_H(K))$. So the condition in the corollary statement is equivalent to that in Theorem \ref{deficiency}. 
\item 
Therefore, there exists
a $\cc$-$\cv'$-transversal for some subset $\cv'\subseteq \cv$ with  $|\cv'| \geq |X|-d$;
denote the function by $f$. 
\item 
Now, we construct a matching in $X$ as follows:
$M := \{ \{x\} \cup f(V_x) ~|~ V_x\in \cv' \}$.
That is, for each $x\in X$ for which $V_x\in \cv'$, 
we take its image by $f$, and add to it the element $x$.
\item 
Note that $f(V_x)\in V_x = N_H(x)$, so $\{x\} \cup f(V_x)$ is indeed an edge in $H$.
\item 
Moreover, the image $f(\cv')$ is an element of $\cc = \cm(N_H(X))$, so all the $f(V_x)$ are pairwise-disjoint.
Therefore, $M$ is indeed a matching in $H$, and its size is at least $|X|-d$.
\end{itemize}
}

It is easier to work with the following slightly more general version:
\begin{corollary}
\label{cor:topological-hall-def}
Let $H$ be a bipartite hypergraph with sides $X,Y$ and $g: \mathbb{Z}_{\geq 0}\to \mathbb{Z}_{\geq 0}$ an integer function for which $g(z+1)\leq g(z)+1$ for all $z\in \mathbb{Z}_{\geq 0}$.
 If $\eta(\cm(N_H(K))) \ge g(|K|)$ for every $K \subseteq
X$, then $H$ has a matching of size $g(|X|)$.
\end{corollary}
\begin{proof}
The condition $g(z+1) \leq g(z)+1$ is equivalent to $z-g(z)$ being weakly-increasing on the integers. By assumption,
\begin{align*}
\eta(\cm(N_H(K))) \ge g(|K|) 
&= |K| - (|K|-g(|K|))
\\
&\geq |K| - (|X|-g(|X|)) && \text{since $|X|\geq |K|$.}
\end{align*}
By Corollary \ref{cor:topological-hall} there is a matching of size $|X| - (|X|-g(|X|)) = g(|X|)$.
\end{proof}

In order to apply Theorems~\ref{ah} --- 
\ref{cor:topological-hall-def}, one needs combinatorially formulated lower bounds on $\eta$, in particular on $\eta(\ci(G))$ for a graph $G$. 

A general lower bound on $\eta(\ci(G))$ is due to Meshulam \cite{me2}. Given an edge $e$ in a graph $G$ we denote by $G-e$ the graph obtained by removing $e$, and by $G \neg e$ the graph obtained by removing the vertices of $e$ and all their neighbors (together with the edges incident to them). 
The Meshulam bound is given by:

\begin{theorem}\label{meshnongame}
For every edge $e$ in a graph $G$ 
$$\eta(\ci(G)) \ge \min(\eta(\ci(G-e)), \eta(\ci(G\neg e))+1).$$
\end{theorem}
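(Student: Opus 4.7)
The plan is to exploit a Mayer–Vietoris decomposition of $\ci(G-e)$ and extract the desired bound on $\eta(\ci(G))$ from the long exact sequence. Write $e = \{u,v\}$, and set $X = \ci(G-e)$, $A = \ci(G)$, and
$$B = \{\tau \in X : \tau \cup \{u,v\} \in X\},$$
the closed star of $\{u,v\}$ in $X$. Every simplex $\tau \in X$ either fails to contain both $u$ and $v$ (and then, being independent in $G-e$ and missing the edge $e$, is also independent in $G$, so $\tau \in A$) or else contains both (so $\tau \in B$); hence $X = A \cup B$. For any $\tau \in B$, both $\tau \cup \{u\} \subseteq \tau \cup \{u,v\} \in X$ and $(\tau \cup \{u\}) \cup \{u,v\} = \tau \cup \{u,v\} \in X$, so $B$ is closed under adding $u$ and is a cone with apex $u$; in particular $B$ is contractible.

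Next I identify $A \cap B$. A simplex $I \in A \cap B$ lies in $\ci(G)$, so it contains at most one of $u,v$, and the requirement $I \cup \{u,v\} \in \ci(G-e)$ forces $I \cap N_G(u) \subseteq \{v\}$ and $I \cap N_G(v) \subseteq \{u\}$. Splitting by how $I$ meets $\{u,v\}$: if $I \cap \{u,v\} = \emptyset$, then $I \subseteq V(G) \setminus (N_G(u) \cup N_G(v)) = V(G \neg e)$ and $I$ is independent in $G$, so $I \in \ci(G \neg e)$; if $u \in I \not\ni v$, then $I = \{u\} \cup J$ with $J \in \ci(G \neg e)$; symmetrically if $v \in I \not\ni u$. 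Hence $A \cap B$ is the union of two cones over $\ci(G \neg e)$, one with apex $u$ and one with apex $v$, glued along their common base $\ci(G \neg e)$ — that is, $A \cap B$ is homeomorphic to the suspension $\Sigma \ci(G \neg e)$. By the suspension isomorphism $\tilde H_k(A \cap B) \cong \tilde H_{k-1}(\ci(G \neg e))$, whence $\eta(A \cap B) = \eta(\ci(G \neg e)) + 1$.

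Now apply Mayer–Vietoris to $X = A \cup B$. Since $\tilde H_\ast(B) = 0$, the relevant stretch of the sequence is
$$ \tilde H_k(A \cap B) \to \tilde H_k(\ci(G)) \to \tilde H_k(\ci(G-e)) \to \tilde H_{k-1}(A \cap B). $$
Set $m := \min(\eta(\ci(G-e)),\, \eta(\ci(G\neg e)) + 1)$. For every $k \le m - 2$, the right-hand group vanishes by the definition of $\eta(\ci(G-e))$, and the left-hand group vanishes because $k \le \eta(A \cap B) - 2$; exactness then forces $\tilde H_k(\ci(G)) = 0$. This is exactly $\eta(\ci(G)) \ge m$, as claimed.

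The main obstacle will be the suspension identification of $A \cap B$: one must verify all three cases for how $I$ meets $\{u,v\}$ and confirm that the two cones meet precisely along the base $\ci(G \neg e)$ with no spurious extra simplices. Once that identification is in place, the Mayer–Vietoris chase is entirely routine and the theorem follows.
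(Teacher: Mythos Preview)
Your proof is correct. The paper itself does not give a proof of this theorem; it merely quotes it as Meshulam's bound and cites \cite{me2} (and \cite{aberger} for the homotopic version). Your argument is in fact the standard one: Meshulam's original proof proceeds exactly by writing $\ci(G-e)$ as the union of $\ci(G)$ and the closed star of $\{u,v\}$, identifying the intersection with the suspension $\Sigma\,\ci(G\neg e)$, and reading off the inequality from the reduced Mayer--Vietoris sequence. Two small remarks: in the final step, the groups you need to vanish are the two \emph{immediate} neighbours of $\tilde H_k(\ci(G))$ in the four-term display (not the outermost one), which is what your justification actually establishes; and for the Mayer--Vietoris sequence to be valid you are implicitly using that $A$, $B$, and $A\cap B$ are nonempty subcomplexes of $X$, which holds since $\{u\}\in A\cap B$ and $\{u,v\}\in B$.
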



This bound  is conveniently expressed in terms of a
game between two agents, CON (wishing to prove high connectivity) and NON (the Mephistophelian ``spirit of perpetual negation''), on the graph $G$.  At each
step, CON chooses an edge $e$ from the graph remaining at this
stage, where in the first step the graph is $G$. NON can then either

\begin{enumerate}
\item delete $e$ from the graph (we call such a step a ``deletion'' or ``disconnection''),

or

\item remove from the graph the two endpoints of $e$, together with all neighbors of these vertices and the edges incident to them (we call such a step
an ``explosion'', and denote by $G\neg e$ the resulting graph).

\end{enumerate}

\noindent The result of the game (payoff to CON) is defined as follows: if at some point there
remains an isolated vertex $v$, the result is $\infty$ (the independence complex is then contractible to $v$, hence it is infinitely connected). Otherwise, at some
point all vertices have disappeared, in which case the result of the
game is the number of explosion steps. We define $\Psi(G)$ as the value of the game, i.e., the result obtained by optimal play on the graph $G$.

\begin{convention} \label{conventionmin} Henceforth we shall assume that NON always chooses the best strategy for him, namely he removes $e$ if   $\min(\Psi(G-e), \Psi(G\neg e)+1)=\Psi(G-e)$, and explodes it if $\min(\Psi(G-e), \Psi(G\neg e)+1)=\Psi(G\neg e)+1$. \end{convention}

Theorem \ref{meshnongame} can be stated as:

\begin{theorem}\label{etaPsi}
$\eta(\ci(G))\ge \Psi(G)$.
\end{theorem}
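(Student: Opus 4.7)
The plan is to prove the bound by induction on $|V(G)|+|E(G)|$, combining Theorem~\ref{meshnongame} with the recursive definition of the game.

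For the base step, suppose $G$ contains an isolated vertex $v$ (which in particular covers every edgeless $G$). Then $v$ lies in every independent set, so $\ci(G)$ is a cone with apex $v$ and hence contractible, giving $\eta(\ci(G))=\infty$. On the other side, $\Psi(G)=\infty$ by the game's rules, so the inequality holds.

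For the inductive step, suppose $G$ has at least one edge and no isolated vertex. Let $e^\ast$ be an edge maximizing $\min\bigl(\Psi(G-e),\Psi(G\neg e)+1\bigr)$ over all edges of $G$; this is CON's optimal first move, and by Convention~\ref{conventionmin} NON responds with whichever of deletion or explosion attains the minimum, so that $\Psi(G)=\min\bigl(\Psi(G-e^\ast),\Psi(G\neg e^\ast)+1\bigr)$. Since $G-e^\ast$ and $G\neg e^\ast$ both have strictly smaller value of $|V|+|E|$ than $G$, the inductive hypothesis yields $\eta(\ci(G-e^\ast))\ge\Psi(G-e^\ast)$ and $\eta(\ci(G\neg e^\ast))\ge\Psi(G\neg e^\ast)$. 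Applying Theorem~\ref{meshnongame} to the edge $e^\ast$ now gives
\[
\eta(\ci(G))~\ge~\min\bigl(\eta(\ci(G-e^\ast)),\,\eta(\ci(G\neg e^\ast))+1\bigr)~\ge~\min\bigl(\Psi(G-e^\ast),\,\Psi(G\neg e^\ast)+1\bigr)~=~\Psi(G),
\]
as required.

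The only conceptual point is that Theorem~\ref{meshnongame} lower-bounds $\eta(\ci(G))$ by exactly the two quantities appearing in the recursion for $\Psi$, and since it holds for \emph{every} edge, we are free to evaluate it at CON's optimal choice. No substantive obstacle is expected beyond bookkeeping of infinite values (which appear precisely at the isolated-vertex base case and propagate consistently through the recursion, since $\infty$ dominates in both the $\min$ and the $+1$ that occur in Meshulam's bound).
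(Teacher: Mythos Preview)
Your argument is correct and is exactly the approach the paper points to: the paper does not give its own proof but refers the reader to \cite{adba} for ``an explicit proof \ldots\ using the recursive definition of $\Psi$,'' which is precisely your induction combining Theorem~\ref{meshnongame} with the max--min recursion $\Psi(G)=\max_e\min(\Psi(G-e),\Psi(G\neg e)+1)$. One cosmetic point: your base case (``$G$ contains an isolated vertex'') does not literally cover the empty graph, where $\Psi=0$ and $\eta(\ci(G))=0$; this is the true bottom of the induction and is trivially verified.
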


In fact, the stronger $\eta_h(\ci(G))\ge \Psi(G)$  is true \cite{aberger}.

\begin{remark}The game formulation first appeared in \cite{abz}. For an explicit proof of Theorem~\ref{etaPsi} using the recursive definition of $\Psi$, see Theorem~1 in \cite{adba}.
\end{remark}

Based on Theorem \ref{etaPsi}, in
Corollary \ref{cor:topological-hall-def}
 $\eta(\cm(N_H(K)))$ can be replaced by  $\Psi(L(N_H(K)))$.

\section{Matchings: from bipartite graphs to tripartite hypergraphs}
\label{sec:tripartite}
To prove Theorem \ref{thm:tri-lower}, we start from K\"onig's theorem for bipartite graphs \cite{konig1916graphok}, which says that $\bm(n,n)=n$ for all $n\geq 1$. We generalize it in several steps.

\subsection{From 1-1 matchings to many-to-many matchings}
K\"onig's theorem implies that, for any integers $n_2\geq n_1\geq 1$, $\bm(n_1, n_2) = n_1$.
This cannot be improved as long as we consider standard (1-to-1) matchings. 
In order to take advantage of the additional vertices in the $n_2$ side, we need  to consider many-to-many matchings.

\begin{definition}
Let $H$ be a $d$-partite hypergraph with sides $V_1,  \ldots ,V_d$.

(a)
Let $m_1,\ldots,m_d$ be positive integers. 
An \emph{$(m_1,\ldots,m_d)$-matching} in $H$
is an integral function $g: H\to \mathbb{Z}_{\geq 0}$ with the following property:
\begin{align*}
0\leq \deg_g(v) \leq m_t
&&
\text{for all $v\in V_t$ for all $t\in [d]$.}
\end{align*}

(b)
Let $r_1,\ldots,r_d$ be positive reals.
An \emph{$(r_1,\ldots,r_d)$-fractional-matching} in $H$
is a real function $f: H\to \mathbb{R}_{\geq 0}$ with the following property:
\begin{align*}
0\leq \deg_f(v) \leq r_t
&&
\text{for all $v\in V_t$ for all $t\in [d]$.}
\end{align*}
\end{definition}
An ordinary matching is a $(1,\ldots,1)$-matching, and an ordinary fractional matching is a $(1,\ldots,1)$-fractional-matching.

The following lemma generalizes K\"onig's theorem to many-to-many matchings in bipartite graphs. It reduces to K\"onig's theorem when $k=1$ and $n_1=n_2$.

\begin{lemma}
 \label{lem:fractional-implies-matching}
Let $G$ be a bipartite graph with sides $V_1, V_2$.
Let $n_1, n_2$ be integers with $n_2\geq n_1\geq 1$.
If there exists a 
$(1/n_1,1/n_2)$-fractional-matching
$f:E(G) \to \mathbb{R}_{\geq 0}$, then, for every $k\geq 1$, 
$G$ has:

(a) A $(\floor{k n_2/n_1},k)$-matching of size $\floor{\floor{k n_2/n_1} \cdot n_1 |f|}$, and ---

(b) A $(\ceil{k n_2/n_1},k)$-matching of size $ \floor {k \cdot n_2 |f|}$.
\end{lemma}

\begin{proof}
\HiddenExplanation{
	The previous proof used the matching polytope,
	and relied on the fact that the matching polytope of a bipartite graph is integral.
	It is not clear how to extend this proof to higher dimensions, since the matching polytope of such hypergraphs is not integral.
	The new proof seems simpler and easier to extend.
}

(a) For each vertex $v_1\in V_1$, construct $\floor{k n_2/n_1}$ clones (including $v_1$);
for each vertex $v_2\in V_2$, construct $k$ clones (including $v_2$). 
Add edges between each clone of $v_1$ and each clone of a neighbor $v_2$ of $v_1$.
Set the weight of each edge between a $v_1$ clone and a $v_2$ clone to
\begin{align*}
\frac{n_1}{k}\cdot  f((v_1,v_2)).
\end{align*}
With the new weight function,
the degree of each clone of $v_1\in V_1$ is at most $\frac{n_1}{k} \cdot (1/n_1) \cdot k = 1$, and the degree of each clone of 
$v_2\in V_2$ is $\frac{n_1}{k} \cdot (1/n_2) \cdot \floor{kn_2/n_1} \leq  1$, so it is a fractional matching.
Since each edge is cloned $k \floor{k n_2/n_1}$ times, the total size of this fractional matching is $\floor{k n_2/n_1}k\cdot \frac{n_1}{k}|f|$.
By K\"onig's theorem, the clone graph has a matching of the same size (rounded down). Re-combining the clones gives a $(\floor{k n_2/n_1},k)$-matching 
of size $\floor{\floor{k n_2/n_1} \cdot n_1 |f|}$.

(b) 
This part differs from (a) only when $k n_2/n_1$ is not an integer. In this case, add one more clone for each vertex $v_1\in V_1$. Link this new clone to each clone of a neighbor $v_2$ of $v_1$, and assign to each new edge a weight of
\begin{align*}
\left(n_2 - \frac{n_1}{k}\cdot \floor{k n_2/n_1}\right)\cdot f((v_1,v_2)).
\end{align*}
The degree of each new clone is at most $\left(n_2 - \frac{n_1}{k}\cdot \floor{k n_2/n_1}\right) \cdot (1/n_1) \cdot k = 
\frac{k n_2}{n_1} - \floor{\frac{k n_2}{n_1}} 
\leq 
1$.
The degree of each clone of $v_2$ is now at most
$
\frac{n_1}{k} \cdot (1/n_2) \cdot \floor{kn_2/n_1}
+
\left(n_2 - \frac{n_1}{k}\cdot \floor{k n_2/n_1}\right)
 \cdot (1/n_2)
= 1$.
Therefore the new function is still a fractional matching.
Its size is now 
$\floor{k n_2/n_1}k\cdot \frac{n_1}{k}|f|
+
\left(n_2 - \frac{n_1}{k}\cdot \floor{k n_2/n_1}\right)\cdot k \cdot |f| = k\cdot n_2|f|
$,
so there is an integral matching of the same size. 
Re-combining the clones gives a $(\ceil{k n_2/n_1},k)$-matching of size $ \floor {k \cdot n_2 |f|}$.
\end{proof}

Every $(n_1,n_2)$-fractionally balanced graph has, by definition, 
a $(1/n_1,1/n_2)$-fractional-matching of size $1$.
Therefore, by Lemma \ref{lem:fractional-implies-matching}:
\begin{corollary}
\label{cor:balanced-implies-matching}
For all $n_2\geq n_1\geq 1$,
every $(n_1,n_2)$-fractionally balanced graph has:
\begin{itemize}
\item An $(\floor{n_2/n_1},1)$-matching of size 
$\floor{n_2/n_1}\cdot n_1$;
\item An $(\ceil{n_2/n_1},1)$-matching of size 
$n_2$;
\item An $(\floor{2 n_2/n_1},2)$-matching of size 
$\floor{2 n_2/n_1}\cdot n_1$;
\item An $(\ceil{2 n_2/n_1},2)$-matching of size 
$2 n_2$.
\end{itemize}
\end{corollary}

\begin{remark}
Corollary \ref{cor:balanced-implies-matching} and Lemma \ref{lem:fractional-implies-matching}
are tight when $n_2/n_1$ is an integer: the disjoint union of $n_1$ copies of the star graph $K_{1,n_2/n_1}$ is $(n_1,n_2)$-fractionally balanced, and the largest $(kn_2/n_1,k)$-matching that fits into it has size $k n_2$ (when the weight of every edge is $k$).
\end{remark}

\subsection{From many-to-many matchings to homological connectivity}
The following lemma says that the existence of a many-to-many matching in a bipartite graph implies a lower bound on the homological connectivity of the matching complex.

\begin{lemma}
\label{lem:matching-implies-eta}
Let $G$ be a bipartite graph with sides $V_1, V_2$.
Let $m_1 \geq m_2 \geq 1$ be positive integers. 
If $G$ has an $(m_1,m_2)$-matching $g$, then 
\begin{align*}
\eta(\cm(G)) \ge
\ceil {\frac{|g|}{
m_1+m_2+\max(m_2-2,0)
}}.
\end{align*}
\end{lemma}
\begin{proof}
Since $\cm(G) =\ci(L(G))$, by Theorem \ref{etaPsi}
it is sufficient to prove that $\Psi(L(G))\geq \ceil {\frac{|g|}{m_1+m_2+\max(m_2-2,0)}}$.
This can be proved by playing Meshulam's game on $L(G)$. 

One can view $G$ as a $2$-dimensional array of cells. 
Each vertex $v_1\in V_1$ corresponds to a row and each vertex $v_2\in V_2$ corresponds to a column.
Each edge $(v_1,v_2)\in G$ corresponds to a cell in the intersection of row $v_1$ and column $v_2$.

In $L(G)$, the vertices are the cells, and each edge corresponds to a pair of cells in the same row or column.
Since $g$ is an $(m_1,m_2)$-matching,
the sum of $g$-weight in each row is at most $m_1$ and in each column at most $m_2$.

We show that, if CON offers pairs of cells in a specific order, then each explosion made by NON destroys cells with a total $g$-weight of at most $m_1+m_2 + \max(m_2,2)-2$. This implies that NON needs at least $\ceil {\frac{|g|}{m_1+m_2 + \max(m_2-2,0)}}$ explosions to destroy all edges.

CON starts by offering pairs of cells in the same row with a weight of at least $1$, that is, pairs of the form $(v_1,v_2')$ and $(v_1,v_2'')$ with $g(v_1,v_2')\geq 1$ and $g(v_1,v_2'')\geq 1$.
If NON explodes such a pair, then one row $v_1$ and two columns $v_2',v_2''$ are destroyed. 
Since a weight of at least $2$ is common to the row and columns, the total weight destroyed is at most $m_1 + 2m_2 - 2$.

If NON disconnects all such pairs, then CON goes on to offer all pairs of cells in the same column $(v_1',v_2)$ and $(v_1'',v_2)$ with $g(v_1',v_2)\geq 1$ and $g(v_1'',v_2) \geq 1$. Each such cell is now connected, in its row, only to cells with  weight $0$. Therefore, if NON explodes such a pair, then the total destroyed weight is the weight in column $v_2$, which is at most $m_2$.

If NON disconnects all offered pairs, then CON offers all pairs of cells in the same row $(v_1,v_2')$ and $(v_1,v_2'')$ with $g(v_1,v_2') \geq 1$ and $g(v_1,v_2'') = 0$.
The cell $(v_1,v_2')$ is connected, in its column, only to cells with  weight $0$. Therefore, an explosion destroys only the weight in row $v_1$ which is at most $m_1$, and the weight in column $v_2''$ which is at most $m_2$, for a total of $m_1+m_2$.

Finally, CON offers all remaining connected pairs of  cells. Each cell is now connected, in its row, only to cells with  weight $0$. Therefore, an explosion destroys only the weight in at most two columns, which is at most $2 m_2$.

In sum, each explosion destroys a total weight of at most
$\max(m_1+2m_2-2, m_2, m_1+m_2, 2m_2) = m_1+m_2+\max(m_2-2,0)$.
\end{proof}

For $m_2 \in \{1,2\}$, 
the right-hand side of Lemma \ref{lem:matching-implies-eta}
is $\ceil {\frac{|g|}{
m_1+m_2}}$.
The following proposition shows that
the bound it gives is tight.

\begin{proposition}
For all integers $m\geq 2$ and $d \geq 1$, there exists a bipartite graph $G$ with the following properties:

(a) an $(m, 1)$-matching $g_1$ with $|g_1|=(m+1)\cdot d$;

(b) an $(m,2)$-matching $g_2$ with $|g_2|=(m+2)\cdot d$;

(c) 
$\eta(\cm(G)) \leq d$.

\end{proposition}
\begin{proof}
Let $G_m$ be the following bipartite graph:

\begin{center}
\begin{tikzpicture}[scale=1]
\node[draw,ellipse] (b1) at (0,2) {$b_1$};
\node[draw,ellipse] (b2) at (1,2) {$b_2$};

\node[draw,ellipse] (a1) at (0,1) {$a_1$};
\node[draw,ellipse] (a2) at (1,1) {$a_2$};
\node (adots) at (2,1) {$...$};
\node[draw,ellipse] (a3) at (3,1) {$a_{m+1}$};

\draw[draw,ellipse,ultra thick] (b1) -- (a1);
\draw[draw,ellipse,dashed] (b2) -- (a1);
\draw[draw,ellipse,ultra thick] (b2) -- (a2);
\draw[draw,ellipse,ultra thick] (b2) -- (adots);
\draw[draw,ellipse,ultra thick] (b2) -- (a3);
\end{tikzpicture}
\end{center}
Let $e^*$ be the thin dashed edge.
Let $f$ be an edge-weight function that assigns a weight of $1/m$ to all edges except $e^*$. Then $f$ is a $(1, 1/m)$-fractional-matching and $|f|=\frac{m+1}{m}$. 
By Lemma \ref{lem:fractional-implies-matching}  (with $k=1, n_1=1,n_2=m$),
$G_m$ has a $(m,1)$-matching of size $m+1$;
one such matching contains all edges except $e^*$.
The same graph $G_m$ has an $(m,2)$-matching of size $m+2$: assign a weight of $1$ to all edges adjacent to $b_2$ except $e^*$, and a weight of $2$ to $(b_1,a_1)$.

The line-graph $L(G_m)$ can be presented as follows, where cells with a * correspond to vertices in $L(G_m)$, and two cells are adjacent iff they are in the same row or column:
\begin{center}
\begin{tabular}{c|c|c|c|c|}
& $a_1$ &$a_2$ &$\ldots$ &$a_{m+1}$ \\
\hline
$b_1$: & * &  &  &  \\
\hline
$b_2$: & * & * & $\ldots$ & * \\
\hline
\end{tabular}
\end{center}

\HiddenExplanation{
	NON can destroy $L(G_m)$ with a single explosion using the following strategy:
	If CON offers a pair that does not contain the cell $e^*$; disconnect it; otherwise, explode it. Since $e^*$ is connected to all other cells, a single explosion destroys $G$.
	Therefore, $\Psi(L(G_m))=1$. 
}
Since $e^*$ is adjacent to all other cells, it is a singleton in $I(L(G_m))$, so $I(L(G_m))$ is disconnected. Hence,
$\tilde{H}_{0}(\cm(G_m))$ is non-trivial and
$\eta(\cm(G_m))=1$.

Now, for every $d\geq 1$, let $G_m^d$ be a disjoint union of $d$ copies of $G_m$. This graph has a $(1,1/m)$-fractional-matching of size $d\cdot |f|$, an $(m,1)$-matching of size $d\cdot (m+1)$, and an $(m,2)$-matching of size $d\cdot (m+2)$.
\HiddenExplanation{
	Since Meshulam's game is played separately on each connected component, $\Psi(L(G_m^d))=d\cdot \Psi(L(G_m)) = d$. 
	\erel{Does this ``additivity'' over connected components hold for $\eta$ too?
	Is the proposition true for 
	$\eta(\cm(G_m^d))$ too?
	Ron: $\eta$ is additive.}
}
The function $\eta$ is additive over connected components (this is a consequence of the K\"unneth formula from algebraic topology---see e.g.\ Section 3 of \cite{km}).
Hence, $\eta(\cm(G_m^d))=d\cdot \eta(\cm(G_m)) = d$. 
\end{proof}

As we have seen, bounds on $\bm$ can be achieved through bounds on the connectivity of matching complexes. It is worthwhile studying such bounds on their own.
\begin{notation}
For positive integers $n_1,n_2$ let  $\zeta(n_1,n_2)$ (resp. $\zeta_h(n_1,n_2)$) be the minimum of $ \eta(\cm(G))$ (resp. $ \eta_h(\cm(G))$)  over all  $(n_1,n_2)$-fractionally balanced bipartite graphs $G$. 
\end{notation}

Applying Lemma \ref{lem:matching-implies-eta} 
with $m_2=2$
to the last two items in Corollary \ref{cor:balanced-implies-matching} implies
 \begin{align*}
\zeta(n_1, n_2) \ge \ceil{\max\left(  \frac{\floor{2 n_2/n_1} n_1}{\floor{2 n_2/n_1} +2},
\frac{2 n_2}{\ceil{2 n_2/n_1} +2} \right)}.
\end{align*}


In particular:

 \begin{align*}
\zeta(n,~ n\cdot (n-1/2)) &~\ge~ n
\\
\zeta(n,~ n\cdot (n-1)/2) &~\ge~ n-1
\end{align*}

Below we prove an almost matching lower bound:
\begin{proposition}
$\zeta_h(n,(n-1)^2)<n$.
\end{proposition}

\begin{proof}

 
Let $m=n-1$, $N=m^2-m$,  $y = \frac{m+1}{m+N}, ~~x=\frac{1}{m+N}$.

Let $A=\{a_1,\ldots,a_m,a\}$ and 
$B=\{b_1,\ldots,b_{N+m}\}$. Define a function $f$ as follows.  

\begin{enumerate}
\item 
$f((a_i,b_j))=y$ for $i \in [m], j \in [N]$ [thin lines];
\item 
$f((a_i,b_j))=x$ for $i \in [m], j >N$ [dashed lines];
\item 
$f((a,b_j))=1$ for $j >N$ [thick lines].
\end{enumerate}

Let  $G$ be the bipartite graph  with respective sides $A$ and $B$, and edge set $\supp(f)$.

\begin{center}
\begin{tikzpicture}[scale=1.5]
\node[draw,ellipse] (a1) at (1,1) {$a_1$};
\node (adots) at (2,1) {$...$};
\node[draw,ellipse] (am) at (3,1) {$a_{m}$};
\node[draw,ellipse] (aa) at (4,1) {$a$};

\node[draw,ellipse] (b1) at (0,0) {$b_1$};
\node (bdots1) at (1,0) {$...$};
\node[draw,ellipse] (bN) at (2,0) {$b_N$};
\node[draw,ellipse] (bN1) at (4,0) {$b_{N+1}$};
\node (bdots2) at (5.5,0) {$...$};
\node[draw,ellipse] (bNm) at (7,0) {$b_{N+m}$};

\draw[ultra thick] (aa)  -- (bN1);
\draw[ultra thick] (aa) -- (bdots2);
\draw[ultra thick] (aa) -- (bNm);

\draw[] (a1)  -- (b1);
\draw[] (a1)  -- (bdots1);
\draw[] (a1)  -- (bN);
\draw[] (adots)  -- (b1);
\draw[] (adots)  -- (bdots1);
\draw[] (adots)  -- (bN);
\draw[] (am)  -- (b1);
\draw[] (am)  -- (bdots1);
\draw[] (am)  -- (bN);

\draw[dashed] (a1)  -- (bN1);
\draw[dashed] (a1)  -- (bdots2);
\draw[dashed] (a1)  -- (bNm);
\draw[dashed] (adots)  -- (bN1);
\draw[dashed] (adots)  -- (bdots2);
\draw[dashed] (adots)  -- (bNm);
\draw[dashed] (am)  -- (bN1);
\draw[dashed] (am)  -- (bdots2);
\draw[dashed] (am)  -- (bNm);
\end{tikzpicture}
\end{center}

$G$ is $(n, (n-1)^2)$ fractionally balanced,
since $\deg_f(a_i) = 
m$ for all $a_i\in A$ 
and   
$\deg_f(b_j) = 
(m+1)/m$ for all $b_j\in B$.
We claim that $\eta(\cm(G))\le m$ (in fact, equality holds, but we do not need this.) 

Let $X$ be the sub-complex of $\cm(G)$ 
induced by the following set of $m$ pairs of vertices:
$\{(a_i,b_i); (a_i,b_{N+i}) \mid i \le m\}$.
Each vertex of $X$ is adjacent in $\cm(G)$ to (=appears in the same matching as) all vertices of $X$ except its counterpart in the pair.
Therefore, $X$ is 
isomorphic to the boundary complex 
of the $m$-dimensional cross-polytope.

The face $M=\{(a_i,b_{N+i}) \mid i \le m\}$  of $X$ is a maximal face in $\cm(G)$,
so it is not contained in any $m$-dimensional simplex of $\cm(G)$. This means that $X$ cannot be filled in $\cm(G)$, in the sense of extending the function embedding the sphere
in the complex to 
the ball (we skip here a passage from 
piecewise linear functions to general functions)
proving that $\eta_h(\cm(G))\le m$. 
\end{proof}

\subsection{From homological connectivity to matchings in tripartite graphs}

Lemma \ref{lem:matching-implies-eta} and the topological Hall theorem provide together a general lower bound on $\bm$ for tripartite hypergraphs.

\begin{theorem*}[Theorem \ref{thm:tri-lower}]
For every
positive integers $n_1\leq n_2\leq n_3$:
\begin{align*}
(a)&&
\bm
(n_1, n_2, n_3)\geq
\min\bigg(
n_1,~
\ceil{\frac{\floor{2 n_3/n_2} n_2 }{\floor{2 n_3/n_2}+2}}
\bigg)
\\
(b)&&
\bm
(n_1, n_2, n_3)\geq 
\min\bigg(
n_1,~
\ceil{\frac{2 n_3}{\ceil{2 n_3/n_2}+2}}
\bigg)
\end{align*}
\end{theorem*}

\begin{remark}
Theorem \ref{thm:tri-lower} has a nicer form when $n_3/n_2$ is an integer:
\begin{align*}
\bm
(n_1, n_2, n_3)
&\geq
\min\bigg(
n_1,~
\ceil{\frac{1}{1/n_2+1/n_3}}
\bigg).
\end{align*}
Note that the most ``efficient'' triplets for this expression are the ones with $1/n_1 = 1/n_2+1/n_3$, e.g. $(n, 2n, 2n)$ or $(2n, 3n, 6n)$. 
This also indicates that the assumption $n_1\leq n_2\leq n_3$ is without loss of generality: any other selection would result in a smaller matching size.
\end{remark}

\begin{proof}[Proof of Theorem \ref{thm:tri-lower}]
Let $H$ be a fractionally balanced  hypergraph with sides $V_1, V_2, V_3$ with $|V_t|=n_t$ for $t\in[3]$.
Let $f$ be a corresponding weight function on $H$, normalized such that $|f|=1$ and  $\deg_f(v)=1/n_t$ for all $v\in V_t$.

For any $K\subseteq V_1$,
the neighbor set $N_H(K)$ is a bipartite graph contained in $V_2\times V_3$.
Let $f_K$ be 
the fractional matching induced by $f$ on $N_H(K)$.
This $f_K$ satisfies the conditions of Lemma \ref{lem:fractional-implies-matching} with $|f_K|=|K|/n_1$. We apply the lemma with $k=2$. 
Below, we denote $s := n_2/n_1$ and $r := n_3/n_2$.

By part (a) of Lemma \ref{lem:fractional-implies-matching}, $N_H(K)$ has a $(\floor{2 r},2)$-matching of size $\floor{\floor{2 r} \cdot s |K|}$, so by Lemma \ref{lem:matching-implies-eta}:
\begin{align*}
\eta(\cm(N_H(K))) 
&\ge
\ceil{\frac{\floor{2 r} s |K|}{\floor{2 r}+2}}
\end{align*}
If $\floor{2 r}s\leq \floor{2 r}+2$, then 
$g(z)=\ceil{\frac{\floor{2 r} s z}{\floor{2 r} +2}}$  satisfies the requirements of 
Corollary \ref{cor:topological-hall-def}, which implies that $H$ has a matching of size $\ceil{\frac{\floor{2 r} s n_1}{\floor{2 r}+2}} = 
\ceil{\frac{\floor{2 n_3/n_2}  n_2}{\floor{2 n_3/n_2}+2}}
$.
Otherwise, the above inequality implies that 
$\eta(\cm(N_H(K))) \geq|K|$,
so Corollary \ref{cor:topological-hall-def} with $g(z)=z$ implies that 
$H$ has a matching of size $n_1$. 
This proves part (a) of the theorem.

Similarly, by part (b) of Lemma \ref{lem:fractional-implies-matching}, $N_H(K)$ has a $(\ceil{2 r},2)$-matching of size $\floor{2 r \cdot s |K|}$, so by Lemma \ref{lem:matching-implies-eta}:
\begin{align*}
\eta(\cm(N_H(K))) 
&\ge
\ceil{\frac{\floor{2 r s |K|}}{\ceil{2 r}+2}}
\end{align*}
If $\ceil{2 r s} \leq \ceil{2 r} + 2$,
then $g(z) = \ceil{\frac{\floor{2 r s z}}{\ceil{2 r}+2}}$
satisfies the requirements of 
Corollary \ref{cor:topological-hall-def}, which
implies that $H$ has a matching of size at least $\ceil{\frac{\floor{2 r s n}}{\ceil{2 r}+2}} =  \ceil{\frac{2 n_3}{\ceil{2 n_3/n_2} +2}}$. 
Otherwise, 
$\ceil{2 r s} \geq \ceil{2 r} + 3$, 
so 
$2 r s \geq \ceil{2 r} + 2$.
Then 
the above inequality implies that 
$\eta(\cm(N_H(K))) \geq|K|$,
so Corollary \ref{cor:topological-hall-def} with $g(z)=z$ implies that 
$H$ has a matching of size $n_1$. 
This proves part (b) of the theorem.
\end{proof}


\begin{proof}[Proof of Corollary \ref{summarybm}]
~
\begin{enumerate}
\item 
$\bm(n,n,n^2-n/2) \geq n$: Apply Theorem \ref{thm:tri-lower}(a): $\floor {2 n_3/n_2} = 2n-1$ and $\ceil{\frac{\floor{2 n_3/n_2} n_2 }{\floor{2 n_3/n_2}+2}} = \ceil{\frac{n  (2n-1)}{2n+1} } = \ceil{n - \frac{2n}{2n+1}} = n$. 
\HiddenExplanation{
	$n-1/2$ is the largest value of $r$ for which the outcome is $n$; see 
	\url{https://math.stackexchange.com/a/3811723/29780}.
}

\item 
$\bm(n,n,{n\choose 2}) \geq  n-1$: Apply Theorem \ref{thm:tri-lower}(a):
$\floor {2 n_3/n_2} = n-1$,
and $\ceil{\frac{\floor{2 n_3/n_2} n_2 }{\floor{2 n_3/n_2}+2}} = 
\ceil{\frac{(n-1)n}{n+1} } = n-1$.

\item 
$\bm(n,n,2n-1) \geq  
\max\left(
\ceil{\frac{2 n - 1}{3}},
\ceil{\frac{3 n }{5}}
\right)$. Apply Theorem \ref{thm:tri-lower}: here
$n_3/n_2 =  2 - 1/n$.
Then part (a) gives
$\ceil{\frac{\floor{4-2/n} n }{\floor{4-2/n}+2}} = 
\ceil{\frac{3 n }{5}}$,
and part (b) gives 
$\ceil{\frac{4n-2}{\ceil{4-2/n}+2}} \geq
\ceil{\frac{4n-2}{6}} = 
\ceil{\frac{2n-1}{3}}$.

\item 
$\bm(n, 2n-1,2n-1) \geq  n$: apply Theorem \ref{thm:tri-lower}(b). Since
$n_3/n_2 =  1$, we have
$\ceil{\frac{2n_3}{\ceil{2 n_3/n_2}+2}} = \ceil{\frac{4n-2}{4}} =  n$.

\item 
$\bm(k,n,n) \geq  \min(k,\ceil{\frac{n}{2}})$:  apply Theorem \ref{thm:tri-lower}(b). Since  
$n_3/n_2 =  1$, we have
$\ceil{\frac{2n_3}{\ceil{2 n_3/n_2}+2}} = \ceil{\frac{2n}{4}} =  \ceil{\frac{n}{2}}$.
\end{enumerate}
\end{proof}

\section{Upper bounds on BM for tripartite hypergraphs}
\label{sec:upperbound-bm}
The Pasch hypergraph (defined in the introduction)
is $(2,2,2)$ fractionally balanced and its maximum matching size is $1$. 
Combined with Theorem \ref{thm:tri-lower} for $n_1=2$, it implies that, for $n_3\geq n_2\geq 2$:
\begin{align*}
\bm(2, n_2, n_3) =
\begin{cases}
1 & \text{when $n_2=n_3=2$;}
\\
2 & \text{otherwise (i.e. when $n_3>2$).}
\end{cases}
\end{align*}
Taking $\frac{n}{2}$ vertex-disjoint copies of the Pasch hypergraph when $n$ is even, or $\floor{\frac{n}{2}}$ copies plus one isolated edge when $n$ is odd,
yields an $(n,n,n)$-fractionally-balanced hypergraph in which the maximum matching size is $\ceil{n/2}$. So we have the following upper bound, which is tight by \citet{furedi1981maximum}:
\begin{theorem}
\label{thm:nnn}
For all $n\geq 1$,
\begin{align*}
\bm(n,n,n) \leq \ceil{\frac{n}{2}}.
\end{align*}
\end{theorem}
Proving an upper bound for sides of different sizes is more challenging. One could try to prove $\bm(k,n,n) \leq \ceil{\frac{n}{2}}$ for  $k<n$ (which would be tight by Theorem \ref{thm:tri-lower}) by deleting a vertex from $n-k$ copies of the Pasch graph. However, while each copy on its own would remain fractionally-balanced, the union of all copies would not. The theorem below provides a different construction that proves a weaker upper bound.

\begin{theorem*}[Theorem \ref{thm:tri-upper-knn}]
For all  $n\geq k\geq 2$,
\begin{align*}
\bm(k,n,n)\le 
\begin{cases}
\min(k,\floor{\frac{k}{2}+\frac{n}{4}}) & \text{If $n$ is even;}
\\
\min(k,\floor{\frac{k}{2}+\frac{n+3}{4}}) & \text{If $n$ is odd;}
\\
\min(k,\ceil{\frac{n}{2}}) & \text{If $k-\floor{\frac{n}{2}}$ divides $\floor{\frac{n}{2}}$.}
\end{cases}
\end{align*}
\end{theorem*}

\begin{proof}
Let $m :=\floor{ \frac{n}{2}}$. When $k\leq m$, the right-hand side equals $k$ and the theorem is trivial, so assume $k>m$.
Let $I=\{1,2,\dots, m\}$ and  $J=\{ m +1,m+2,\dots,k\}$.
Define the following hypergraphs on vertex set $V := \{a_1,\ldots,a_k,b_1,\ldots,b_n,c_1,\ldots,c_n\}$:
\begin{itemize}
    \item $H_1 = \{a_i b_{2i-1} c_{2i} \mid i\in I\} \cup \{a_i b_{2i} c_{2i-1} \mid i\in I\}$,
    \item $H_2 = \{a_j b_i c_i \mid j\in J, i\in [2 m] \}$,
    \item $H_3=\{a_j b_n c_n \mid j\in J\}$.
\end{itemize}

\textbf{(a)} If $n$ is even, let $H=H_1\cup H_2$. The diagram below shows some edges of $H_1$ (thick) and $H_2$ (thin). Assign weights $\frac{1}{2}$ to every edge in $H_1$ and $\frac{1}{n}$ to every edge in $H_2$.
\begin{center}
\def\aline{1}
\def\bline{0.5}
\def\cline{0}
\def\height{0.7}
\begin{tikzpicture}[scale=3]
\node[draw,ellipse] (a1) at (0,\aline) {$a_1$};
\node[draw,ellipse] (a2) at (0.4,\aline) {$a_2$};
\node[draw,ellipse] (a3) at (0.8,\aline) {$a_3$};
\node[draw,ellipse] (a4) at (1.2,\aline) {$a_4$};
\node  at (1.43,\aline) {$\ldots$};
\node[draw,ellipse] (am) at (1.7,\aline) {$a_m$};
\node[draw,ellipse] (am1) at (2.3,\aline) {$~a_{m+1}$};
\node[draw,ellipse] (am2) at (2.9,\aline) {$~a_{m+2}$};
\node  at (3.3,\aline) {$\ldots$};
\node[draw,ellipse] (ak) at (3.6,\aline) {$a_k$};
\node[draw,ellipse,below=\height of a1] (b1)  {$b_1$};
\node[draw,ellipse,below=\height of a2] (b2)  {$b_2$};
\node[draw,ellipse,below=\height of a3] (b3)  {$b_3$};
\node[draw,ellipse,below=\height of a4] (b4)  {$b_4$};
\node  at (1.43,\bline) {$\ldots$};
\node[draw,ellipse,below=\height of am] (bm)  {$b_m$};
\node[draw,ellipse,below=\height of am1] (bm1)  {$~b_{m+1}$};
\node[draw,ellipse,below=\height of am2] (bm2) {$~b_{m+2}$};
\node  at (3.5,\bline) {$\ldots$};
\node[draw,ellipse] (bn) at (4,\bline) {$b_n$};
\node[draw,ellipse,below=\height of b1] (c1) {$c_1$};
\node[draw,ellipse,below=\height of b2] (c2)  {$c_2$};
\node[draw,ellipse,below=\height of b3] (c3)  {$c_3$};
\node[draw,ellipse,below=\height of b4] (c4)  {$c_4$};
\node  at (1.43,\cline) {$\ldots$};
\node[draw,ellipse,below=\height of bm] (cm){$c_m$};
\node[draw,ellipse,below=\height of bm1] (cm1) {$~c_{m+1}$};
\node[draw,ellipse,below=\height of bm2] (cm2) {$~c_{m+2}$};
\node  at (3.5,\cline) {$\ldots$};
\node[draw,ellipse,below=\height of bn] (cn) {$c_n$};
\draw [ultra thick] (a1) -- (b1) -- (c2);
\draw [ultra thick] (a1) -- (b2) -- (c1);
\draw [ultra thick] (a2) -- (b3) -- (c4);
\draw [ultra thick] (a2) -- (b4) -- (c3);

\draw (am1) -- (b1);
\draw (am1) -- (b2);
\draw (am1) -- (b3);
\draw (am1) -- (b4);
\draw (am1) -- (bm);
\draw (am1) -- (bm1);
\draw (am1) -- (bm2);
\draw (am1) -- (bn);

\draw (am2) -- (b1);
\draw (am2) -- (b2);
\draw (am2) -- (b3);
\draw (am2) -- (b4);
\draw (am2) -- (bm);
\draw (am2) -- (bm1);
\draw (am2) -- (bm2);
\draw (am2) -- (bn);

\draw (b1) -- (c1);
\draw (b2) -- (c2);
\draw (b3) -- (c3);
\draw (b4) -- (c4);
\draw (bm) -- (cm);
\draw (bm1) -- (cm1);
\draw (bm2) -- (cm2);
\draw (bn) -- (cn);
\end{tikzpicture}
\end{center}
Then $\deg a_i = 1$ and $\deg b_i = \deg c_i = k/n$, so $H$ is fractionally balanced.
\HiddenExplanation{
	$\deg a_i = 1/2+1/2 = 1,
	\deg a_j = m/n+m/n = 1;
	\deg b_i = 1/2+(k-m)/n = 1/2 + k/n-1/2 = k/n,
	\deg b_{m+i} = 1/2+(k-m)/n = k/n;
	\deg c_i = \deg b_i = k/n.
	$
}
Let $M$ be any matching in $H$, and assume that some $x\geq 0$ edges of $M$ come from $H_1$. Each such edge touches two $(b_i,c_i)$ edges in the graph induced by $H_2$ on $B\times C$. Hence, $M$ can contain at most $\min\{k-m,2m-2x\}$ edges of $H_2$. 
\HiddenExplanation{
	At most $k-m$ --- since $H_2$ edges contains only vertices $a_j$ with $j\in J$, and $|J|=k-m$.
	At most $2m-2x$ --- since each edge in $H_1$ ``blocks'' two pairs of $B\times C$ in $H_2$.
}
So (since $m=n/2$):
$$\nu(H) \le x+\min\{k-m, 2m-2x \}=\min\{k-m+x,2m-x\} \le \frac{k+m}{2}
= \frac{k}{2} + \frac{n}{4} .
$$  

\textbf{(b)}
If $n$ is odd,  let $H=\bigcup_{i=1}^3
H_i$.  To see that $H$ is fractionally balanced, assign weights $\frac{1}{2}$ to every edge in $H_1$,  $\frac{1}{n-1} - \frac{k}{n(n-1)(k-m)}$ to every edge in $H_2$, 
and    $\frac{k}{n(k-m)}$ to every edge of $H_3$,
to get again $\deg a_i = 1$ and $\deg b_i = \deg c_i = k/n$.
\HiddenExplanation{
	In the odd case, $2m = n-1$.
	$\deg a_i = 1/2+1/2 = 1;
	\deg a_j = 2m/(n-1) - 2mk/(n(n-1)(k-m)) + k/(n(k-m)) = 1 -k/n(k-m)+k/n(k-m) = 1;
	\deg b_i = 	\deg c_i = 1/2 + (k-m)/(n-1) - k/n(n-1) = 1/2 + (k-m)/2m - k/2mn = k/2m - k/2mn = k(n-1)/2mn = k/n;
	\deg b_n = \deg c_n = (k-m) \cdot k/n(k-m)  = k/n
	$
}
$H_3$ adds at most one edge to the maximum matching.
So (since $m=(n-1)/2$): $$\nu(H) \le \frac{k+m}{2} + 1
= \frac{k}{2} + \frac{n+3}{4} .
$$  

\textbf{(c)}
Write $k'=k-m$. Since $k-m$ divides $m$ we have  $m=qk'$ for some integer $q\ge 1$. 
The idea is to replace the hypergraph $H_2$, in which each $a_j$ is connected to all $2 m$ pairs $(b_i,c_i)$,  with a smaller hypergraphs, in which each $a_j$ is connected only to $2 q$ such pairs.
For all $j\in J$, define $Q_j := \{ 2q(j-m) - (2q-1),\ldots, 2q(j-m) \}$,
so that $Q_{m+1} = \{1,\ldots,2q\}$, $Q_{m+2} = \{2q+1,\ldots,4q\}$, etc.
Note that each $Q_j$ contains $2q$ indices, and all $Q_j$ are pairwise-disjoint.
Define the following hypergraph:
\begin{equation*}
H_2' = \{a_j b_i c_i \mid j\in J, i \in Q_j\}.
\end{equation*}
Note that $H_2'$ contains $k'\cdot 2q = 2m$ edges.
 
\begin{center}
\def\aline{1}
\def\bline{0.5}
\def\cline{0}
\def\height{0.7}
\begin{tikzpicture}[scale=3]
\node[draw,ellipse] (a1) at (0,\aline) {$a_1$};
\node[draw,ellipse] (a2) at (0.4,\aline) {$a_2$};
\node[draw,ellipse] (a3) at (0.8,\aline) {$a_3$};
\node[draw,ellipse] (a4) at (1.2,\aline) {$a_4$};
\node  at (1.43,\aline) {$\ldots$};
\node[draw,ellipse] (am) at (1.7,\aline) {$a_m$};
\node[draw,ellipse] (am1) at (2.3,\aline) {$~a_{m+1}$};
\node[draw,ellipse] (am2) at (2.9,\aline) {$~a_{m+2}$};
\node  at (3.3,\aline) {$\ldots$};
\node[draw,ellipse] (ak) at (3.6,\aline) {$a_k$};
\node[draw,ellipse,below=\height of a1] (b1)  {$b_1$};
\node[draw,ellipse,below=\height of a2] (b2)  {$b_2$};
\node[draw,ellipse,below=\height of a3] (b3)  {$b_3$};
\node[draw,ellipse,below=\height of a4] (b4)  {$b_4$};
\node  at (1.43,\bline) {$\ldots$};
\node[draw,ellipse,below=\height of am] (bm)  {$b_m$};
\node[draw,ellipse,below=\height of am1] (bm1)  {$~b_{m+1}$};
\node[draw,ellipse,below=\height of am2] (bm2) {$~b_{m+2}$};
\node  at (3.5,\bline) {$\ldots$};
\node[draw,ellipse] (bn) at (4,\bline) {$b_n$};
\node[draw,ellipse,below=\height of b1] (c1) {$c_1$};
\node[draw,ellipse,below=\height of b2] (c2)  {$c_2$};
\node[draw,ellipse,below=\height of b3] (c3)  {$c_3$};
\node[draw,ellipse,below=\height of b4] (c4)  {$c_4$};
\node  at (1.43,\cline) {$\ldots$};
\node[draw,ellipse,below=\height of bm] (cm){$c_m$};
\node[draw,ellipse,below=\height of bm1] (cm1) {$~c_{m+1}$};
\node[draw,ellipse,below=\height of bm2] (cm2) {$~c_{m+2}$};
\node  at (3.5,\cline) {$\ldots$};
\node[draw,ellipse,below=\height of bn] (cn) {$c_n$};
\draw [ultra thick] (a1) -- (b1) -- (c2);
\draw [ultra thick] (a1) -- (b2) -- (c1);
\draw [ultra thick] (a2) -- (b3) -- (c4);
\draw [ultra thick] (a2) -- (b4) -- (c3);

\draw (am1) -- (b1);
\draw (am1) -- (b2);

\draw (am2) -- (b3);
\draw (am2) -- (b4);

\draw (b1) -- (c1);
\draw (b2) -- (c2);
\draw (b3) -- (c3);
\draw (b4) -- (c4);
\draw (bm) -- (cm);
\draw (bm1) -- (cm1);
\draw (bm2) -- (cm2);
\draw (bn) -- (cn);
\end{tikzpicture}
\end{center}

If $n$ is even, define   
$H=H_1 \cup H_2'$. 
To see that $H$ is fractionally balanced, assign weight $\frac{1}{2}$ to every edge of $H_1$ and $\frac{k-m}{2m}=\frac{1}{2q}$ to every edge of $H_2'$.
\HiddenExplanation{
	$\deg a_i = 1$ obviously.
	$\deg b_i = \deg c_i = 1/2 + k'/2m = (k'+m)/2m=k/n$.
}
Let $M$ be any matching in $H$, and assume that some $x\geq 0$ edges of $M$ come from $H_1$.
Each such edge touches a pair of adjacent $(b_i,c_i)$ edges.
There are $m$ such pairs overall, and each edge in $H_2'$ touches a different pair. 
Hence, $M$ can contain at most $m-x$ edges in $H_2'$, so $\nu(H)\le m=\frac{n}{2}$.

If $n$ is odd, let $H=H_1 \cup H_2'\cup H_3$.
Assigning weight $\frac{1}{2}$ to every edge in $H_1$, 
and    $\frac{k}{n(k-m)}$ to every edge of $H_3$, and $\frac{k-m}{n-1}-\frac{k}{n(n-1)}$ to every edge of $H_2'$, we obtain  that $H$ is fractionally balanced.
 Now $\nu(H) \le m+1 = \lceil \frac{n}{2} \rceil$.
\end{proof}


By the previous theorems,
\begin{align*}
\bm(n,2n-2,2n-2)&\leq n-1 && \text{by Theorem \ref{thm:tri-upper-knn}};
\\
\bm(n,2n-1,2n-1)&\geq n && \text{by Theorem \ref{thm:tri-lower}};
\end{align*}
\begin{question}
What is the value of  $\bm(n,2n-2,2n-1)$?
\end{question}
The smallest open case is $\bm(3,4,5)$: we do not know whether it is $2$ or $3$.

The following upper bound uses a different construction.

\begin{theorem*}[Theorem \ref{thm:main_negative}]
Let $r \geq 1$.
Then for any $ 
k \leq rn$,
\begin{align*}
\bm(n,n,k) \leq \left\lceil 2r n/(2r+1) \right\rceil.
\end{align*}
\end{theorem*}
\begin{proof}



Let $n'=\left\lceil 2r n/(2r+1) \right\rceil$. 
We may assume $k > n'$, otherwise
trivially $\bm(n,n,k)\leq k \leq n'$, so
let $N$ be such that  $k=N+n'$.
For such $N$ the system of equations

  \[
    \left\{
                \begin{array}{ll}
                  Ny+(n-n') + n' x=n'\\~\\
                  2(n-n') + n'x =n' y
                  
                \end{array}
              \right.
  \]
have a 
non-negative  
solution in $x$ and $y$: 
$y=\frac{n}{k}$, $x=\frac{n}{k}+2-\frac{2n}{n'}$.

\HiddenExplanation{
$x \geq \frac{1}{r} + 2 - 2\cdot \frac{2r+1}{2r} = 0$.
}

We construct a 3-partite hypergraph $H$.
Let $V
=\{a_1, \dots, a_{N+n'} , b_1, \dots, b_n, c_1, \dots, c_n \}$
Define the hypergraphs on $V$ as follows:
\begin{enumerate}
    \item 
$H_1=\{ a_j b_i c_i
| i \in \left[n' \right], j \in [N] \}$, all weigh $y$;
    \item
    $H_2=\{a_{N+i} b_i c_{n'+j
    }
    |i \in \left[n'  \right], j \in [n-n']\}$, all weigh 1;
    \item
    $H_3=\{a_{N+i} b_{n'+j
    } c_i 
    | i \in \left[n'  \right], j \in [n-n']\}$, all weigh 1;
   \item 
    $H_4=\{a_{N+j}b_ic_i
    | i,j \in  \left[n' \right]\}$, all weigh $x$.
\end{enumerate}
Let $H=H_1 \cup H_2 \cup H_3 \cup H_4$. 

For $i \in [n' ]$ we have $\deg(b_i)=\deg(c_i)=N \cdot  y+(n-n')+n'  \cdot x$, and for 
$j > n'$
we have $\deg(b_{j})=\deg(c_{j})=
n'
$.
\HiddenExplanation{
	Here we need the assumption that $2r$ is an integer.
}
By the choice of $x$ and $y$ these are equal.

For $i \in [N]$ we have  $\deg(a_i)=n'  y$
and 
for $j \in [n']$ we have  $\deg(a_{N+j})=2(n-n')+n' x$. Again, these are equal.
Thus $H$ is fractionally 
balanced.

We claim that $\nu(H) \leq n' $. To see this, note that if $M$ is a matching of size $n'  +1$, then it must contain edges from $H_2 \cup H_3$, and its trace on the $b_i$'s and $c_i$'s should contain two edges of the form $b_i c_{n' + j}$ and 
$b_{n' + j} c_i$, but both these edges can be completed to an edge of $H$ only by adding to them the element $a_{N+i}$. 
\end{proof}

\section{Extending $\bm$ to hypergraphs of higher dimensions}
\label{sec:higher-dimensions}

\subsection{Lower bounds}

What happens to the ``$\bm$'' function if a side of size $n_{d+1}$ is added to the hypergraph? Obviously, adding a coordinate can only (weakly) decrease the value of the function $BM$, but it is natural to assume that adding a large enough coordinate does not strictly decrease it. This is indeed the case.

\begin{theorem}\label{thm:ramseyscr}
If $\bm(n_1, \dots, n_d)\geq m$, then there exists $N = N(n_1,\ldots,n_d)$ such that 
$\bm (n_1, \dots, n_d, n_{d+1}) \geq  m$
whenever $n_{d+1}>N$.
\end{theorem}

The main tool we shall use in the proof of Theorem \ref{thm:ramseyscr} is the so-called ``Gordan's lemma'' from convex geometry. Recall the \emph{dual} of a convex cone $X\subset \R^k$ is 
\[
X^*:=\{v \in \R^k: \left\langle x,v \right\rangle \geq 0 \; \forall x \in X\}.
\]

A polyhedral cone in $\R^k$ is said to be {\em rational} if its extreme rays are multiples of  vectors with rational coordinates.
\begin{theorem}[Gordan's Lemma]\label{thm:Gordan}
If $X\subset \R^k$ is a rational convex polyhedral cone, then  the semigroup $X^* \cap \Z^k$, with the operation of coordinate-wise addition, is finitely generated.
\end{theorem}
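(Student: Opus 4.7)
The plan is to first reduce the statement to a cleaner form. Since the excerpt defines a rational polyhedral cone to be one whose extreme rays are spanned by rational vectors, and since the dual of a rational polyhedral cone is itself a rational polyhedral cone (by the Minkowski--Weyl theorem, which identifies polyhedral cones generated by finitely many rays with finite intersections of rational half-spaces, and dualizes accordingly), the problem reduces to the following assertion: for every rational polyhedral cone $C \subseteq \R^k$, the semigroup $C \cap \Z^k$ is finitely generated. I would state the Minkowski--Weyl equivalence as a black box, since working it out carefully is a standard but long digression.

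Given this reduction, I would write $C = \{\sum_{i=1}^r \lambda_i v_i : \lambda_i \geq 0\}$, where $v_1, \dots, v_r$ span the extreme rays of $C$. By rationality, after clearing denominators I may assume each $v_i \in \Z^k$. The key geometric object is then the half-open ``fundamental parallelepiped''
\[
\Pi := \Big\{\sum_{i=1}^r \mu_i v_i : 0 \leq \mu_i < 1 \Big\}.
\]
Because $\Pi$ is bounded and $\Z^k$ is discrete, the set $S := \Pi \cap \Z^k$ is finite.

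The claim I would then prove is that the finite set $\{v_1, \dots, v_r\} \cup S$ generates $C \cap \Z^k$ as a semigroup under coordinate-wise addition. To verify this, take any $w \in C \cap \Z^k$ and write $w = \sum_{i=1}^r \lambda_i v_i$ with $\lambda_i \geq 0$. Decompose each coefficient as $\lambda_i = \lfloor \lambda_i \rfloor + \{\lambda_i\}$, giving
\[
w \;=\; \sum_{i=1}^r \lfloor \lambda_i \rfloor v_i \;+\; \sum_{i=1}^r \{\lambda_i\} v_i.
\]
The first sum is a non-negative integer combination of the $v_i$. The second sum lies in $\Pi$ by construction; since it equals $w$ minus an integer vector, it lies in $\Z^k$, hence in $S$. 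Therefore $w$ is a non-negative integer combination of elements of $\{v_1, \dots, v_r\} \cup S$, which proves the claim.

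The main conceptual obstacle, as flagged above, is the duality step: one must know that $X^*$ is again a rational polyhedral cone. This is not self-evident from the ``extreme rays are rational'' definition and genuinely requires Minkowski--Weyl (or an equivalent argument via Fourier--Motzkin elimination on a rational system defining $X$). Once that is in hand, the fundamental parallelepiped argument above is short and self-contained.
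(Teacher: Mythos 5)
The paper does not supply a proof of Gordan's Lemma at all: it states the result as classical and, after giving an equivalent formulation in terms of an integral system $\{\vec x : A\vec x \geq \vec 0\}$, merely points to Alon--Berman for a geometric proof with explicit bounds. So there is no ``paper proof'' to compare against; I will assess your argument on its own.

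Your proof is the standard Hilbert-basis / fundamental-parallelepiped argument, and it is correct. All the steps check out: the $v_i$ and every element of $S=\Pi\cap\Z^k$ lie in $C\cap\Z^k$ (since $\Pi\subset C$); $\Pi$ is bounded, hence $S$ is finite; and for any $w=\sum\lambda_i v_i$ with $\lambda_i\geq 0$, the vector $\sum\{\lambda_i\}v_i$ lies in $\Pi$ and, being $w$ minus an integer vector, lies in $\Z^k$, so it belongs to $S$, giving the desired decomposition $w=\sum\lfloor\lambda_i\rfloor v_i + s$ with $s\in S$ and nonnegative integer coefficients throughout. One small remark on the obstacle you flag: the duality step is slightly lighter than you suggest. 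If $X=\operatorname{cone}(u_1,\dots,u_m)$ with the $u_i$ integral (which is what the paper's rationality hypothesis gives after clearing denominators), then $X^*=\{v:\langle u_i,v\rangle\geq 0\ \forall i\}$ is \emph{immediately} an integral $H$-description --- this is precisely the form in the paper's ``Equivalently'' remark and requires no Minkowski--Weyl. The theorem is genuinely needed only for the reverse passage, from that $H$-description of $X^*$ to a $V$-description $\operatorname{cone}(v_1,\dots,v_r)$ with rational generators, which is the form your parallelepiped argument consumes. Treating it as a black box, as you do, is entirely reasonable; just be aware that only one direction of Weyl's theorem is actually invoked.
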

Equivalently, the lemma can be stated as follows.
Let $A$ be an integral matrix, and let $X^*=\{\vec{x} \mid A\vec{x} \ge \vec{0}\}$. 
Then there is a finite subset $S\subseteq X^* \cap \Z^k$ such that every vector in $X^* \cap \Z^k$ is a  linear combination of vectors from $S$  with integral coefficients.
Such a subset $S$ is called a \emph{Gordan base} of $X^*$.

Fix positive integers $n_1, \dots, n_d$.
Let $K := \prod_{t=1}^d [n_t] = $ the set of edges in the complete $d$-partite hypergraph with sides of size $n_1, \dots, n_d$,
$ W(n_1, \dots, n_d) \subset \R_{\geq 0}^{K} $
the collection of all balanced weight functions on this hypergraph, 
and
$\bh (n_1, \dots, n_d) $
the collection of $(n_1, \dots, n_d)$- fractionally balanced hypergraphs.
Moreover, for every $H \in \bh(n_1, \dots, n_d)$, let 
\[
W_H:=\{w\in W(n_1, \dots n_d): \supp \; w \subset H\}
\]
be the collection of weight functions on $H$ witnessing its balanced-ness.
The set $W(n_1, \dots, n_d)$ is defined by a set of linear inequalities with integer coefficients:
for each vertex $v \in [n_t]$ in side $t \in [d]$, there are $n_t-1$ inequalities, stating that the sum of weights near $v$ is at least as large as the sum of weights near the other vertices in the same side.
The same is true for its subsets $W_H$.
By definition, this means that these sets are rational convex polyhedral cones. Hence we have:

\begin{claim}
\label{thm:finitebalancedhypergraphs}
For every positive integers $n_1,\ldots,n_d$, the set of integral balanced weight functions,
$W(n_1, \dots, n_d) \cap \Z^K$, is finitely generated as a semigroup.
\end{claim}

\begin{example} For any $n\geq 1$:
\begin{itemize}
    \item 
The $n!$ characteristic functions of perfect matchings form a Gordan base for $W(n,n)$;
this is  the  Birkhoff von-Neumann theorem.

    \item An easy generalization is  that, for any integer $s\geq 1$,  $W(n,sn)$ is generated by the characteristic functions of perfect $(1,s)$-matchings.
    
\end{itemize}
\end{example}

We denote by $\U(n_1,\ldots,n_d)$ the Gordan base of $W(n_1, \dots, n_d)$. If there is more than one, we select one arbitrarily.

\begin{proof}[Proof of Theorem \ref{thm:ramseyscr}]

Choose $n_{d+1}=N$ sufficiently large (to be determined below). 
We assume that every hypergraph in $\bh(n_1, \dots, n_d)$ contains a matching of size $m$; we have to prove that 
the same is true for every hypergraph in $\bh(n_1, \dots, n_d, N)$.

Let $H' \in \bh(n_1, \dots, n_d,N)$.
Define $\cj := [n_1] \times \dots \times [n_d]$ 
and $\cj^+ := \cj\times [N]$.
Then the cone $W_{H'} \subset \R_{\geq 0}^{\cj^+}$ is rational and nonempty, hence it contains a nonzero integral point $w'$.
Now, as $w'$ is balanced, every vertex in the $t$-th class $[n_t]$ has the same total $w'$-degree, say $\delta_t$, for all $t\in[d+1]$. By double-counting, $n_t \delta_t=n_{d+1} \delta_{d+1}=N \delta_{d+1}$ for any $t \in [d]$.

Define $w \in \R_{\geq 0}^{\cj}$ as follows. For each $e \in \cj$, let 
\[ w(e):=\sum_{j\in [N]} w'(e,j). \]
Since $w'$ is balanced, $w$ is balanced too. Specifically, $w \in W_H$, where $H$ is the $d$-partite hypergraph obtained from $H'$ by removing the $(d+1)$-st vertex side $[N]$. 
Since $w\in W_H \subset W(n_1, \dots, n_d)$, 
by Claim \ref{thm:finitebalancedhypergraphs}
there is a finite decomposition
\[
w=\sum_{\ell=1}^T u_{\ell}
\]
for some $u_1, \dots, u_T \in \U(n_1,\ldots,n_d)$
(where if a function $u$ appears in the sum with coefficient $c$, we decompose it as a sum of $c$ copies of $u$). 
Let 
$$
\delta_{\max}:=\max_{
v\in
V(\cj),~
u \in \U(n_1,\ldots,n_d)}\deg_u(v).$$
Since  every vertex $v_t\in [n_t]$ has $w$-degree $\delta_t$, it follows that $\delta_t = \deg_{w}(v_t)
=\sum_{\ell=1}^T \deg_{u_\ell} v_t
 \leq T\cdot \max_{\ell} \deg_{u_{\ell}}(v_t) \leq \delta_{\max}\cdot T$.

Let $H_\ell\in \bh(n_1, \dots, n_d)$ be the support of $u_\ell$ for each $\ell \in [T]$. By assumption,
since $H_\ell$ is fractionally-balanced,
it contains a matching $M_\ell$ of size $m$. The number of different possible such matchings is
\[
q:=
\prod_{t=1}^d \binom{n_t}{m}
\cdot
(m!)^{d-1}.
\]

Now, we let 
$N := \delta_{\max}\cdot q \cdot \max_t n_t $.
Note that $\delta_{\max}$ depends only on the $\{n_i\}$s, hence $N$ depends only on the $\{n_i\}$'s and $m$ (and in particular is independent of the choice of $H$).
Then $T \geq \frac{\delta_t}{\delta_{\max}} =\frac{N \delta_{d+1}}{n_t \delta_{\max}} \geq
q \delta_{d+1}$. The  pigeonhole principle shows that some $\delta_{d+1}$ different  $M_\ell$s are identical.  Without loss of generality $M_1=\dots=M_{\delta_{d+1}}=M$. 
Since $M_{\ell} \subseteq H_{\ell} = \supp(u_{\ell})$, 
we have
 $u_{\ell}(e) \geq 1$ for every $\ell \leq \delta_{d+1}$ and every edge $e \in M$.

For every subset $E\subseteq M$,
let $J_E$ be the subset of vertices $j$ in the $(d+1)$-st part of $H'$ having $w'(e,j) \geq 1$ for some $e\in E$. Such $(e,j)$ are necessarily edges of $H'$.
Then:
\begin{align*}
\delta_{d+1} \cdot |E| 
&\leq \sum_{\ell=1}^{\delta_{d+1}} \sum_{e \in E} u_{\ell} (e) && \text{(since $u_{\ell}(e)\geq 1$ for all $\ell\leq \delta_{d+1}$)}
\\
&\leq \sum_{e \in E} w(e) 
&& \text{(since $w = \sum u_{\ell}$)}
\\
&= \sum_{e \in E}  \sum_{j \in [N]}  w'(e,j)
&& \text{(by definition of $w$)}
\\
&=\sum_{j \in J_E}  \sum_{e \in E}  w'(e,j) 
&& \text{(since $w'$ is nonzero only for $j\in J_E$)}
\\
&\leq |J_E|\cdot \delta_{d+1},
&& \text{(since $\delta_{d+1} = \deg_{w'}(j)$).}
\end{align*}
so $|J_E|\geq |E|$ for all $E\subseteq M$.
Applying Hall's theorem to the bipartite graph on $(M,N)$ induced by $H$,
this implies that there is an injection $g: M \to [N]$ such that $(e,g(e)) \in H$ for every $e \in M$.
This  yields an extension of $M$ to a $(d+1)$-partite matching of size $m$ in $H'$.
\end{proof}

In \citet{alonberman} a geometric proof of Gordan's lemma was given, providing an explicit bound. This can be used to give an upper bound on $N$, but we shall not pursue this. 

\begin{question}
Does there exist $N=O(\prod_{t=1}^d n_t)$ satisfying the conclusion of Theorem \ref{thm:ramseyscr}?
\end{question}

\subsection{Upper bounds}
By Theorem \ref{furedi}, $\bm(n,n,\ldots,n) \ge 2$. We conjecture that this is tight.

\begin{conjecture}\label{conj:nn}
$\bm(\underbrace{n,n\cdots,n}_\text{$n$ times}) =2$.
\end{conjecture}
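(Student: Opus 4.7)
The lower bound $\bm(\underbrace{n,\ldots,n}_n) \ge 2$ follows immediately from Theorem~\ref{thm:furedi}: by Observation~\ref{obs:nu*} any $(n,\ldots,n)$-fractionally balanced $n$-partite hypergraph has $\nu^* = n$, so $\nu \ge \lceil n/(n-1)\rceil = 2$ for every $n \ge 2$. The content of the conjecture is therefore the matching upper bound --- construct, for each $n$, an $n$-partite $(n,\ldots,n)$-fractionally balanced hypergraph with matching number exactly~$2$.

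For $n \le 4$ I would give the construction explicitly, using tools already in this paper. For $n = 2$ take $K_{2,2}$; for $n = 3$, Theorem~\ref{thm:nnn} supplies the Pasch hypergraph plus an isolated edge, already achieving $\nu = 2$. For $n = 4$, Theorem~\ref{thm:nnn} yields a $(4,4,4)$-balanced $3$-partite hypergraph $H$ with $\nu(H) = 2$ (two vertex-disjoint Pasch hypergraphs). I would then promote $H$ to a $(4,4,4,4)$-balanced $4$-partite hypergraph $H'$ by setting $H' := \{e \cup \{v\} : e \in H,\ v \in V_4\}$ with $|V_4| = 4$, lifting any balanced weight $f$ on $H$ to $f'(e \cup \{v\}) := f(e)/4$. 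A direct check shows that $f'$ is balanced on $H'$ and that $\nu(H') = \min(\nu(H),|V_4|) = 2$.

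For $n \ge 5$ this extension approach breaks, because by Theorem~\ref{thm:nnn} we have $\bm(n,n,n) = \lceil n/2\rceil \ge 3$, so there is no $3$-partite base with $\nu = 2$ to lift from. Here I would search for a genuinely $n$-partite algebraic construction. A natural first candidate is a $\mathbb Z_n$-equivariant hypergraph on $V_i = \mathbb Z_n$, whose edges are defined by a small family of affine conditions $\{(v_1,\ldots,v_n) : \sum_i a_i^{(k)} v_i \equiv c_k \pmod n\}$ for carefully chosen coefficient vectors. Simultaneous translation by $\mathbb Z_n$ on every side would give fractional balance automatically, and the combinatorial work would be to arrange the coefficient vectors so that any three pairwise coordinate-disjoint solutions force a contradiction modulo $n$. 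A second candidate is a Pasch-style blow-up of a truncated projective or affine plane of small order, inflated to side size $n$ with controlled filler edges.

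The main obstacle I expect is the tension between fractional balance and $\nu = 2$: in every naive construction, adding filler edges to equalize degrees on each side reintroduces a matching of size at least $3$. A successful construction should arise from a tight design in which the coordinates of two coordinate-disjoint edges already carry a ``fingerprint'' (a group element, a linear invariant) that makes a third coordinate-disjoint edge numerically impossible. The first truly non-trivial case is $n = 5$: settling the conjecture there --- either by exhibiting a $(5,5,5,5,5)$-balanced $5$-partite hypergraph on $25$ vertices with $\nu = 2$, or by proving none exists --- would be the decisive first step.
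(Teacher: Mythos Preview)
This statement is a \emph{conjecture} in the paper and is not proved there in full; the paper establishes it only for $n$ of the form $q$, $q+1$, $q+p-2$, or $q+k-1$, where $q,p$ admit $q$- and $p$-uniform projective planes and $k\le\log_2 n$. Your lower bound via Theorem~\ref{thm:furedi} matches the paper's one-line remark, and your explicit constructions for $n\le 4$ are correct (the lifting $H' = \{e\cup\{v\}:e\in H,\ v\in V_4\}$ is fine). So as a plan your proposal is honest about the gap.

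Where your plan and the paper diverge is the construction for larger $n$. You propose to look for $\mathbb Z_n$-equivariant hypergraphs or Pasch-style blow-ups and flag $n=5$ as the decisive open case. The paper's actual construction is more concrete and already settles $n=5$ (and infinitely many further $n$): take the truncated projective plane $H_q$, obtained from a $q$-uniform projective plane by deleting one point and all lines through it. This is a $q$-partite intersecting hypergraph with sides of size $q-1$; its disjoint union with a single edge $I_q$ gives a $q$-partite $(q,\ldots,q)$-balanced hypergraph with $\nu=2$. Variants (duplicating a side of $H_q$, combining two truncated planes, or pairing with a second intersecting family $J_n(k)$) cover the other listed forms of $n$. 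Your ``tight design with a linear invariant forcing a contradiction'' is exactly what the projective-plane incidence structure supplies; the missing idea in your plan is to use it. The values of $n$ not of these forms remain open in the paper as well.
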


It suffices to show that for every $n$ there exists a fractionally-balanced $n$-partite hypergraph $H_n$ with sides of size $n$, having $\nu=2$.

One way to construct such $H_n$
is as a union of two intersecting ($\nu=1$) fractionally-balanced $n$-partite hypergraphs with sides of size $n_1$ and $n_2$ with $n_1+n_2=n$. 

Moreover, 
for any integer $t<n$,
an
intersecting  fractionally-balanced
$t$-partite hypergraph can be extended to  
an
intersecting  fractionally-balanced
$n$-partite hypergraph 
by adding $n-t$ duplicates of one of its sides.
Hence, to construct the desired hypergraph $H_n$, it is sufficient to construct two intersecting fractionally-balanced hypergraphs, 
one of which is $t_1$-partite with sides of size $n_1$, and the other is $t_2$-partite with sides of size $n_2$,
such that $t_1,t_2\leq n$ and $n_1+n_2=n$.

One class of intersecting fractionally-balanced hypergraphs is the class of truncated projective planes (projective planes with one vertex deleted). For every integer $q$ for which a projective plane of order $q$ exists, the corresponding truncated projective plane is $q$-partite with sides of size $q-1$.
By combining two such planes, of orders $p$ and $q$, we can get a hypergraph $H_n$ with $n=p+q-2$.
So, we have: 
 
 \begin{observation}
  Conjecture \ref{conj:nn} is true for every $n$ of the form $n=q+p-2$, where $p,q$ are  integers for which projective planes of order $p$ and $q$ exist. In particular, if $p,q$ are prime powers.
 \end{observation}
 
 So, for $n$ even  Conjecture \ref{conj:nn}  would follow from the Goldbach conjecture. It would follow for large $n$ in a similar way from the following: 
 
 \begin{conjecture}
 There exists a function $z(n)\in o(n)$ such that for every $m \le n-z(n)$ there exists an intersecting fractionally balanced $n$-partite hypergraph with sides of size $m$:
  $\bm(\underbrace{m,m\cdots,m}_\text{$n$ times}) =1$.
 \end{conjecture}

\section{An application: multidimensional and rainbow versions of the KKM theorem}
\label{sec:kkm}


Results on the function $\bm$ can be applied to get  versions of the famous KKM theorem (that speaks about a single simplex) for products of simplices. Here are the necessary definitions.

Given a polytope $Q$, 
we denote by $V(Q)$ its set of vertices. The 
$(n-1)$-dimensional simplex $\Delta_{n-1}$ is the set of points $\vec{x}=(x_1, \ldots,x_n)\in \R_{\geq 0}^n$ satisfying $\sum x_i=1$. Its vertices are $e_i, ~i \le n$, where $e_i(j)=\mathbf{1}_{i=j}$, namely $e_i(i)=1, ~ e_i(j)=0$ for $j \neq i$. 

\begin{definition}
Given a polytope $Q$, a KKM-cover for $Q$ is a collection of closed sets $\{A_v \mid v \in V(Q)\}$  satisfying 
 \begin{equation}\label{generalkkm}
     \sigma \subseteq \bigcup_{v \in V(\sigma)}A_v
 \end{equation}
for every face $\sigma$ of $Q$. 
\end{definition}

\begin{remark}
In more general versions, a set $A_\sigma$ is assigned to every face $\sigma$, not only to vertices. See Theorem \ref{thm:komiyaproduct} below.
\end{remark}

A well-known continuous version of  the even better-known Sperner's lemma is:

\begin{theorem}[The KKM theorem \cite{kkm}]\label{kkm}
If $\{A_v | ~v \in V(\dnm)\}$ is a KKM cover  for $\dnm$,
   then $\bigcap_{v\in V(\dnm)} A_v \neq \emptyset$.
\end{theorem}

Gale \cite{gale} proved a rainbow (in another terminology, ``colorful'') version, in which there are $n$ KKM-covers (``colors'') $A^i_v,~~v \in V(\dnm), ~i\in [n]$,  and each contributes a set to the intersecting sub-collection:

\begin{theorem}[Rainbow KKM]\label{gale}
For every $i\in [n]$ let $\ca^i=\big(A^i_v,~~v \in V(\dnm)\big)$  be  a KKM-cover of 
 $\Delta_{n-1}$. Then there exists a bijection $\phi: [n]\to V(\dnm)$ 
such that
 $\bigcap_{i=1}^{n} A^{i}_{\phi(i)} \neq \emptyset$. 
\end{theorem}

We want to extend the theorem from simplices to products of simplices, namely polytopes $Q = \cp := \prod_{t \le d}\Delta_{n_t-1}$, for positive integers $n_t,1\le t \le d$. A vertex of $\cp$ is a tuple $(e_{i_1},\ldots,e_{i_d})$, where $i_t \in [n_t]$ for all $t\le d$. Two vertices 
$(e_{i_1},\ldots,e_{i_d})$ 
and 
$(e_{i'_1},\ldots,e_{i'_d})$ are called {\em disjoint} if $i_t \neq i'_t$ for all $t \le d$.%

\begin{definition}\label{admissible}
For $\cp$ as above, a set $\ca=\{A^1,\dots,A^m\}$, where $A^i = \{A^i_v \mid v\in V(\cp)\}$, of $m$ KKM-covers for $\cp$ is called {\em admissible} if  there exists a function $\phi: [m] \to V(\cp)$ such that 
$\bigcap_{i=1}^m A^i_{\phi(i)}\neq \emptyset$ and the vertices $\phi(i)$ for $i\in[m]$ are pairwise disjoint.
\footnote{
To explain the definition intuitively,
consider each vertex of a simplex as an item,
so each vertex of $\cp$ corresponds to a bundle (a set of items).
Disjoint vertices correspond to bundles that can be allocated to different people.
See Section \ref{sec:cakes} for more details.
}
\end{definition}
Obviously, the maximum possible size of an admissible family 
is $\min_{t \le d}n_t$.
Theorem \ref{gale} says that,
for $d=1$,
every family of $n$ KKM covers for $\Delta_{n-1}$ 
is admissible.
%
%

\begin{notation} 
Let $\ad(n; n_1, n_2, \ldots ,n_d)$ denote the largest integer $m$ such that 
every family of $n$ KKM covers for $\cp := \prod_{t \le d}\Delta_{n_t-1}$ has an admissible sub-family of size $m$.
\end{notation}
In our notation, Theorem \ref{gale} is $\ad(n; n) \geq n$.

\begin{remark}\label{remark:monotone}
The function
$\ad(n; n_1, n_2, \ldots ,n_d)$ is monotone in all arguments. Namely, 
 $\ad(n; n_1, n_2, \ldots ,n_d) \geq \ad(n'; n'_1, n'_2, \ldots ,n'_d)$ whenever  $n'\geq n$, $n'_1 \ge n_1, \ldots ,n'_d \ge n_d$.
 Monotonicity in the first argument is obvious; monotonicity in the other arguments is proved in Proposition \ref{prop:monotonicity-ad}.
\end{remark}

We shall prove the following:

\begin{theorem}\label{thm:bm-implies-ad}$\ad(n; n_1, \ldots ,n_d) \geq \bm(n, n_1, \ldots ,n_d)$
for all integers $d\geq 1$ and $n, n_1,\ldots,n_d\geq 1$.
\end{theorem}

\HiddenExplanation{
\begin{enumerate}
    \item Theorem \ref{thm:bm-implies-ad}. Given a cake-division problem with $d$ cakes, the theorem produces a $(d+1)$-partite hypergraph, with weights on the edges, that are evenly distributed in every vertex side. We shall call such hypergraphs {\em fractionally balanced}. The theorem then asserts that the number of agents that can be satisfied by some  division of the cakes is at least the matching number of this hypergraph. This theorem is inspired by a result of Meunier and Su \cite{MS}.
    
    \item A topological version of Hall's theorem. It will be used to find lower bounds on the matching numbers of fractionally balanced hypergraphs. 
   
\end{enumerate}
}


The   case $d=1$ of the theorem  was proved by Meunier and Su \cite{MS}, using  triangulations. 
This method  can be extended to the general case, but we choose another route, that uses a generalization of the KKM theorem due to Komiya \cite{komiya}. Another beautiful generalization of the KKM theorem that can be used at this point is given in \cite{Zaifu1, Zaifu2}.

\begin{theorem}
\label{thm:komiyaproduct}
Let $R$ be a polytope. 
Let a point $y_\sigma$ be chosen in every face $\sigma$ of $R$  (in particular, a  point $y_R \in R$ is chosen to represent the polytope $R$ itself), and let $B_\sigma$ be a closed subset of $R$ chosen for every face $\sigma$ of $R$. Suppose furthermore that  

	\begin{equation}\label{komiyacondition}
	   \sigma \subseteq \bigcup_{\tau \subseteq \sigma} B_\tau~~ \text{ for every  face}~~\sigma~~\text{of} ~~R.
	\end{equation}
	Then 
there exists a set $Z$ of faces of $R$ 
such that
\begin{enumerate}
\item $\bigcap_{\sigma\in Z} B_{\sigma} \neq \emptyset$. 
\item $y_R \in \conv\{y_{\sigma} | \sigma\in Z\}$.
\end{enumerate}
\end{theorem}

We shall need a slightly more general version.  

\begin{theorem}
\label{thm:komiyaproduct}
Let $R$ be the product $X \times Y$ of two polytopes.
Suppose that in each face $\sigma$ of $R$ there is chosen a point $y_\sigma$, and that for every  nonempty face $\sigma = \alpha \times \beta$ of $R$ ($\alpha$ a face of $X$, $\beta$ a face of $Y$) there is a set $B_\sigma=V_\sigma \times W_\sigma$, 
where 
$V_\sigma$ is an open set in $X$ and $W_\sigma$ is a closed set in $Y$. 
If 
	\begin{equation}\label{komiyacondition}
	   \sigma \subseteq \bigcup_{\tau \subseteq \sigma} B_\tau~~ \text{ for every  face}~~\sigma~~\text{of} ~~R.
	\end{equation}
then 
there exists a set $Z$ of faces of $R$ 
such that
\begin{enumerate}
\item $\bigcap_{\sigma\in Z} B_{\sigma} \neq \emptyset$. 
\item $y_R \in \conv\{y_{\sigma} | \sigma\in Z\}$.
\end{enumerate}
\end{theorem}
 (the original theorem is obtained by putting  $X=\Delta_0$, a single point).
The general version is obtained using a standard technique, of replacing each $ V_\sigma$ by a closed subset of it, while maintaining the intersection pattern of the sets $V_\sigma$ and condition \eqref{komiyacondition}.

\begin{proof}[Proof of Theorem \ref{thm:bm-implies-ad}]
Let $m := \bm(n, n_1,\ldots,n_d)$.
 As before, let $\cp=
 \Delta_{n_1-1}\times \ldots \times \Delta_{n_d-1}$. 
We have a family of $n$ KKM covers of $\cp$,
$\ca^i=\big\{A^i_v,~~v \in V(\cp)\big\}$, for $i\in[n]$.
We have to prove that it has an admissible sub-family of size $m$.

Let  $D$ be a copy of $\Delta_{n-1}$ and $V(D)=\{e_1,\ldots ,e_n\} $. Let $R=D \times \cp$.
We define a Komiya cover $\{B_\sigma \mid \sigma \text{~is a face of~} R\}$ as follows.

For every vertex $v=(e_i,\vj)$ of $R$,  let $B_v=\star(e_k) \times A^i_{\vj}$, where $\star(e_k) := \{v: v_k > 0\}$.
Note that 
$\star(e_k)$ is open
and
$A^i_{\vj}$ is closed.
\HiddenExplanation{
	If we take $\star(e_k) := \{v: v_k \geq 1/n\}$,
	then both sets are closed, and we can use the original theorem.
}

For all other faces $\sigma$ of $R$ (namely faces with a positive dimension) let $B_\sigma=\emptyset$. 
For every  face $\sigma$ of $R$  let $y_\sigma$ be the barycenter of $\sigma$.

\begin{claim}
The sets $B_\sigma$ satisfy
Komiya's condition \eqref{komiyacondition}.
\end{claim}
\begin{proof}
Let $\sigma= \alpha \times \beta$ be a face of $R$ ($\alpha$ a face of $D$, $\beta$ a face of $\cp$) and let $w=(\vec{d}, \vec{p})$ be a point in $\sigma$, where $\vec{d}\in \alpha\subseteq D$ 
and $\vec{p}\in \beta \subseteq \cp$.
We have to show that $w \in B_v$ for some vertex $v \in V(\sigma)$. 
Choose some $i\in[n]$ for which 
$d_i > 0$.
So $\vec{d}\in \star(e_i)$.
Since the sets $A^i_{\vj}$ form a KKM-cover of $\cp$, 
they particularly satisfy the KKM condition \eqref{generalkkm} for its face $\beta$,
so $\vec{p} \in A^i_{\vj}$
for some vertex $\vj\in V(\beta)$.
Then $w \in B_v$ for $v=(e_i, \vj) \in V(\sigma)$. 
\end{proof}

\medskip

By Theorem \ref{thm:komiyaproduct}, there exists a set  $Z \subset V(R)$ 
such that 
\begin{enumerate}
\item $\bigcap_{v\in Z} B_v \neq \emptyset$,
and 
\item $\conv(Z)$ contains the barycenter of $R$. 
\end{enumerate}

Let $H$ be a $(d+1)$-partite hypergraph, in which the $t$-th side, $t \in [d]$, is $V(\Delta_{n_t-1})$, 
and side $d+1$ is $V(D)$.
The edges of $H$ are the elements of $Z$.
Condition (2) above means that some convex combination of the edges of $H$ gives $\frac{1}{n_t}$  on all vertices of side $t,~t\in[d]$, and gives $\frac{1}{n}$ on all vertices of $D$. This means that $H$ is fractionally balanced. 

By assumption, $\bm (n, n_1, \ldots ,n_d) =  m$. So this hypergraph $H$ contains a matching $M=\{h_1,\ldots,h_m\}$ of size $m$, where $h_i = (e_i, \vj_i)$ is an edge of $H$, with 
$e_i \in V(D)$ and $\vj_i \in V(\cp)$.

Now, we define the function $\phi: [m]\to V(\cp)$ as follows:
for all $i\in [m]$, $\phi(i) = \vj_i$. Since $M$ is a matching, the vertices $\phi(i)$ are pairwise-disjoint.
Moreover, since all elements of $M$ are also elements of $Z$, condition (1) above implies that
$\bigcap_{i=1}^m B_{\vj_i} \neq \emptyset$.
So, the sub-family of $m$ KKM covers corresponding to $\vj_1,\ldots,\vj_m$ is admissible.

This concludes the proof that  $\ad(n; n_1, \ldots ,n_d) \geq m$.
\end{proof}

\begin{remark}
\label{rem:ad-not-imply-bm}
The converse of Theorem \ref{thm:bm-implies-ad} is false: $\ad$ may be strictly larger than $\bm$.
This follows from the fact that, 
by our Corollary \ref{cor:bm-upper-bound}
$\bm(2n-1,n,n) \leq \ceil{4n/5}$;
however, \citet{NSZ} have recently proved that 
$\ad(2n-1; n, n) \geq n$ (as we explain in Section \ref{sec:cakes}).
\end{remark}

We do not know if the property of ``higher-dimension extension'' (Theorem \ref{thm:ramseyscr}) holds for  $\ad$. We pose this as a conjecture.

\begin{conjecture}
If $\ad(n; n_1,\ldots,n_d)\geq m$ 
then there exists some $n_{d+1}$ (a function of $n,n_1,\ldots,n_d$)
such that $\ad(n; n_1,\ldots,n_d, n_{d+1})\geq m$.
\end{conjecture}

\section{An equivalent formulation: division of multiple cakes}
\label{sec:cakes}
The rainbow-KKM formulation of ``admissibility'' has an equivalent formulation, using the terminology of ``cakes''. In this section 
we describe this equivalence, for the benefit of those interested in cake-cutting. It can be skipped by those who are content with just the combinatorial formulation. Note, though, that some of the lower bounds on values of the function $\ad$ below are obtained using the terminology of cake partition (which can be then translated into the KKM covers terminology).

In the classic cake-cutting problem \citep{Stromquist80,Su99},
there is a single ``cake'' which is a copy of the unit interval $[0,1]$. A partition of $[0,1]$ into $n$ interval pieces can be identified with the vector $(x_1, \ldots ,x_n)$ of the lengths of the pieces, listed from left to right, and since $\sum x_i=1$, such a partition can be viewed as a point in $\Delta_{n-1}$. 

There is a set of $n$ ``agents'' (or ``players'') and the goal is to divide the cake among them, giving each agent a single interval.
The agents are  choosy. 
 Each agent $i$ has, for each partition $P \in \Delta_{n-1}$,  a nonempty list 
 $L^i(P)\subseteq [n]$ of acceptable pieces, indicated by their indices.
For each index $j\in[n]$, we define
$A^i_{j} := (L^i)^{-1}(j)
= $ the set of partitions (points in $\Delta_{n-1}$) in which agent $i$ accepts piece $j$.
The choices of each agent $i$ should satisfy the following assumptions:

\begin{enumerate}
\item \emph{Closedness}: the sets $A^i_{j}$ are closed for all $j\in[n]$.
\item \emph{Hungriness}: for every partition $P$, the agent accepts some nonempty piece in $P$. That is: 
there exists $j\in[n]$ such that $P_j>0$ and $P\in A_j^i$ (equivalently: $j\in L^i(P)$).
\end{enumerate}

If these two assumptions are satisfied, then there always exists a partition and an assignment of pieces to agents, such that each agent is assigned an acceptable piece. 
We call such a partition an \emph{admissible division}.%
\footnote{In the cake-cutting literature, the lists $L^i$ are called {\em preference lists}.
For each partition $P\in\cp$, $L^i(P)$ is the set intervals that agent $i$ considers the ``best'' in that partition. Then, an admissible division is called {\em envy-free} \citep{Su99}, since an agent who receives a best piece would not envy any other agent.
We prefer the ``admissibility''  terminology, since
the requirement that agent $i$'s portion is in $L^i(P)$ is not a preference, it is absolute. And there is no issue of envy or fairness --- nobody squints at other agents' portions.}
Its existence was proved in several ways \citep{Stromquist80,Su99}. 
In fact, it is equivalent to the rainbow-KKM theorem (\ref{gale}): the ``closedness'' and ``hungriness'' properties are equivalent to the conditions ensuring that $(A^i_j|j\in [n])$ is a KKM-cover for all $i$, and an admissible division is equivalent to a point in the common intersection of $n$ sets from different covers.

The cake-cutting problem can be extended to multiple cakes \citep{cloutier2010two,Lebert2013Envyfree,NSZ}.
Given $d$ ``cakes'' $C_1, \ldots ,C_d$, we consider partitions $P$  
of their union,  the $t$-th cake $C_t$ being partitioned into $n_t$ slices. Then $P=((P^1_1,\dots,P^1_{n_1}),\dots, (P^d_1,\dots,P^d_{n_d}))$ is an element of $\cp:=\prod_{t =1}^{ d}\Delta_{n_t-1}$. 
The subintervals of $C_t$ in the partition $P$ are denoted by  
 $I^t_{j}(P),~ j \in [n_t]$, when ordered from left to right.  So, the length of  $I^t_{j}(P)$
is $P^t_j$.

We denote by $\cj$ the set of all vectors $\vj =(j_1, \ldots ,j_d), ~j_t\in [n_t]$.
For every vector $\vj =(j_1, \ldots ,j_d) \in \cj$, let  
$v(\vj)=(e_{j_1}, e_{j_2}, \ldots e_{j_d})$
be the vertex of $\cp$ corresponding to $\vj$. Our notation will sometimes not distinguish between $\vj$ and $v(\vj)$.  
 
There is a set of agents.
Given a partition $P$, we wish to allocate to each  agent a $d$-tuple  of slices, one from each cake. Such a $d$-tuple is determined by a vertex 
 $v(\vj)$ 
 of 
 $\cp$ (that is, by a vector  $\vj  \in \cj$) ---
 choosing  the slice $I^t_{j_t}$ from $C_t$
 for each $t \in [d]$.  
Of course, we want the $d$-tuples  $\vj_a$ and $\vj_b$ of slices allocated to two distinct agents $a,b$ to be disjoint, namely component-wise distinct. 
As an example application \citep{cloutier2010two}, suppose each ``cake'' represents the time of a workday, and each interval represents a shift. The goal is to assign, to each agent, a shift in every day.

Each agent $i$ has, for each partition $P \in \cp$,  a list 
  $L^i(P)$ of acceptable $d$-tuples $\vec{j}$ of slices, indicated by their indices. 
For example,  suppose $d=3, ~n_1=n_2=n_3=5$.
Then $L^i(P)=\{(3,5,2), (1,4,2) \} $ means that agent $i$  is ready to accept in the partition $P$ either $(I^1_3,I^2_5,I^3_2) $ or $(I^1_1,I^2_4,I^3_2)$. Thus, 
$L^i$ is a multi-valued function from $\cp$ to $\cj$. Its inverse is denoted  by $A^i$. Formally,
for every vector $\vec{j} \in \cj$ and every agent $i$,
\begin{align*}
A^i_{\vec{j}} := (L^i)^{-1}(\vec{j})
=
\{P \in \cp \mid \vec{j} \in L^i(P)\}.
\end{align*}
So $A^i_{\vec{j}}$ is the set of partitions in which agent $i$ is ready to accept the $d$-tuple $\vec{j}$. 

The next observation expresses natural conditions on the lists $L^i(P)$ as a  KKM-cover condition on the sets $A^i_{\vj}$. 
\begin{observation}\label{trivialobs}
For every $i$,  the sets $A^i_{\vj}, ~
\vec{j}  \in V(\cp),$ form a KKM-cover of $\cp$ if and only if they satisfy the following conditions:
\begin{enumerate}
\item \emph{Closedness}: $A^i_{\vj}$ is closed for all $\vec{j}$.
\item \emph{Hungriness}: for every partition $P\in \cp$, 
agent $i$ accepts at least one $d$-tuple of nonempty pieces. In other words,
there exists  $\vec{j} \in V(\cp)$ such that  
    $P^t_{j_t} > 0$ for every $t \in [d]$,
    and $P\in A^i_{\vec j}$ (equivalently: $\vec{j}\in L^i(P)$).
\end{enumerate}
\end{observation}
 

\begin{proof}[Proof of Observation \ref{trivialobs}]
 Suppose that (1) and (2) hold. Let $\sigma$ be a face of $\cp$ and let  $P\in \sigma$. 
By (2), there exists $\vj$ with $P^t_{j_t}>0$ for all $t \in [d]$ and such that $P \in A^i_{\vj}$. Clearly, then, $v(\vj)$ is a vertex of $\cp$ and hence of $\sigma$, showing $P \in \bigcup_{v\in V(\sigma)} A^i_v$. Thus the collection   $\{A^i_v,~v\in V(\cp)\}$ forms a KKM-cover of $\cp$.

Conversely, suppose  that the sets $A^i_{\vj}$ form a KKM-cover of $\cp$. Let $P \in \cp$ and let $S=\supp(P) = $ the minimal face of $\cp$ containing $P$. 
 By (\ref{generalkkm}), there exists a vertex $v=v(\vj)$ of $S$, where $\vj=(j_1,\dots,j_d)$,  with $P \in A^i_{\vj}$. Since $v$ is a vertex in $S$,  $P^t_{j_t}>0$ for all $t\in [d]$, as required in (2).
\end{proof}

Translating Definition \ref{admissible} to the cake-cutting terminology, we say that a set $\ca$ of agents is called {\em admissible} if there
exists a function $\phi: \ca \to V(\cp)$ such that 
$\bigcap_{q \in \ca}A^q_{\phi(q)}\neq \emptyset$ and the vertices
$\phi(q)$ for $q\in\ca$ are pairwise-disjoint.

The condition  means that it is possible
to partition the cakes in a way that placates every player $q \in \ca$. 
A partition $P \in \bigcap_{q \in \ca}A^q_{\phi(q)}$ yields a  division of the cakes, in which if every $q \in \ca$ receives the slices defined by  the vertex $\phi(q)$ then she is happy, since $\phi(q) \in L^q(P)$. We call such an allocation ``admissible''.
We would like to satisfy as many agents as possible, so our aim is to prove the existence of large admissible agent sets.  

Summarizing the discussion above, we have:

\begin{observation}
 $\ad(n;n_1, n_2, \ldots ,n_d) \geq m$ if and only if the following holds:
 
 For every instance of the admissible division problem with $n$ agents and $d$ cakes, where cake $t$ is partitioned into $n_t$ parts, 
there exists a partition $P \in \prod_{t \le d}\Delta_{n_t-1}$ 
for which there exists an admissible set of at least $m$ agents. 
\end{observation}
Note that $n$ and the $n_t$'s play different roles, and are not interchangeable.

\subsection{Previous  results on the values of the function AD} 
Most of the literature on cake-cutting studies the case of a single cake, $d=1$.
\citet{Stromquist80} and \citet{Woodall1980Dividing}, as well as \citet{Su99}, proved that, for any $n\geq 1$, an admissible division exists. In our notation, this means that $\ad(n;n) = n$. 

Recently, some results for $d= 2$ cakes have been proved. These results follow from our Corollary \ref{summarybm} on $\bm$, and from our Theorem \ref{thm:bm-implies-ad} relating $\bm$ to $\ad$.

\begin{itemize}
\item $\ad(2;2,3)  \geq 2$ and $\ad(3;2,2) \geq 2$ {\em \cite{cloutier2010two}}. Follows from  Corollary \ref{summarybm} (\ref{n2-n2}).
\item $\ad(3;5,5) \geq 3$ {\em  \cite{Lebert2013Envyfree}}. Follows from Corollary \ref{summarybm} (4).
\end{itemize} 

A more general result is that, for any $d\geq 2$, $\ad(p; \underbrace{n,\dots,n}_{d ~times}) \geq  \big\lceil\frac{p}{2d(d-1)}\big\rceil$ whenever $p \le d(n-1) +
1$, and 
$\ad(p; \underbrace{n,\dots,n}_{d~times}) \geq  \big\lceil\frac{p}{d(d-1)}\big\rceil$ if $p$ divides  $d(n-1) +
1$ {\em \cite{NSZ}}. 
In particular, 
$\ad(2n-1; n, n) \geq n$. 
This result does \emph{not} have an analogue with $\bm$; see Remark
\ref{rem:ad-not-imply-bm}.

\section{Upper bounds on $\ad$}
\label{sec:upperbounds}
In this section we prove non-existence results for admissible division, implying upper bounds on the function $\ad$.

We prove two upper bounds on $\ad$ for two cakes. 
Both proofs use the same 3-partite hypergraph,  which is based on an example by Drisko \cite{drisko}  (index addition is cyclic, so $n+1\equiv 1$):
\begin{align*}
H_D := &\{ (i, j, j) ~|~ 1\leq i\leq n-1, ~~ 1\leq j\leq n\}
\\
    \cup &\{ (i, j, j+1) ~|~ n\leq i\leq 2n-2, ~~ 1\leq j\leq n\}.
\end{align*}
Note that $H_D$ is $(2n-2,n,n)$-fractionally-balanced and has no matching of size $n$.

In \cite{NSZ} it was proved that  $\ad(2n-1; n, n)\geq n$. The next theorem shows that this is sharp.

\begin{theorem}
\label{thm:2n-2,n,n}
For all $n\geq 2$,
$\ad(2n-2;n,n) <n$.
\end{theorem}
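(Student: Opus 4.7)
The plan is to construct an explicit instance of the admissible division problem with $2n-2$ agents on two cakes of $n$ slices each, in which no $n$-subset of agents is placatable. The preferences will be designed so that any placatable configuration yields a matching in Drisko's hypergraph $H_D$; since $\nu(H_D) < n$, no $n$ agents can be simultaneously placated. I partition the agents into two blocks mirroring the two blocks of $H_D$: Type-1 agents $i \in [n-1]$ are designed to accept only the diagonal bundles $(j,j)$, and Type-2 agents $i \in \{n, \ldots, 2n-2\}$ only the shifted bundles $(j, j+1 \bmod n)$. For each Type-1 agent $i$ I take $A^i_{(j,j)}$ to be a closed set defined by a symmetric scoring rule on $(P^1_j, P^2_j)$ (for instance, $\{P : P^1_j + P^2_j \ge 2/n\}$), so that these $n$ diagonal sets cover $\cp$ and at a generic $P$ the only accepted bundles are diagonal. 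The off-diagonal sets $A^i_{(j_1, j_2)}$ are taken minimal closed subsets of $\cp$, just large enough to complete the KKM covering on those boundary faces $\sigma = F_1 \times F_2$ whose vertex index sets $S_1, S_2 \subseteq [n]$ are disjoint (the only faces containing no designed diagonal vertex). The Type-2 preferences are analogous, with a cyclic shift on cake $2$.

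Granting this construction, suppose $n$ agents are simultaneously placated at some $P$ via component-wise distinct bundles, all of which are $H_D$-edges through their agents. Let $k$ of them be Type-1 with bundles $(r_1, r_1), \ldots, (r_k, r_k)$ and the remaining $n-k$ be Type-2 with bundles $(t_1, t_1+1), \ldots, (t_{n-k}, t_{n-k}+1)$ (mod $n$). Component-wise distinctness on cake $1$ gives the disjoint union $\{r_s\} \sqcup \{t_s\} = [n]$, and on cake $2$ gives $\{r_s\} \sqcup \{t_s + 1\} = [n]$. Taking complements in $\mathbb{Z}/n\mathbb{Z}$ yields $\{t_s\} = \{t_s + 1\}$, so the set $\{t_1, \ldots, t_{n-k}\}$ is invariant under the cyclic shift $x \mapsto x+1 \pmod n$ and is therefore $\emptyset$ or all of $\mathbb{Z}/n\mathbb{Z}$. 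This forces $k \in \{0, n\}$, contradicting the supply of only $n-1$ agents of each type.

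The main obstacle is ensuring that this argument also rules out placatable configurations which use off-designed bundles; such bundles must be accepted at boundary partitions $P$ whose support-face index sets satisfy $S_1 \cap S_2 = \emptyset$, since at such $P$ the hungry-agents condition cannot be met by any designed diagonal or shifted bundle. The boundary extensions of $A^i_{\vj}$ can be arranged so that any off-designed acceptance at such a $P$ is a bundle $(j_1, j_2) \in S_1 \times S_2$ (equivalently $v(\vj) \in V(\supp(P))$). Since $S_1 \cap S_2 = \emptyset$ forces $|S_1| + |S_2| \le n$, we have $\min(|S_1|, |S_2|) \le \lfloor n/2 \rfloor$, so the number of component-wise distinct off-designed bundles available from $S_1 \times S_2$ is strictly less than $n$. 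A refined bookkeeping that combines this degeneracy bound with the cyclic-shift argument applied to the $H_D$-edge portion of the placated configuration then rules out placatable sets of size $n$ at every $P$.
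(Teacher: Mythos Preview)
Your Drisko-based strategy and the cyclic-shift argument for $\nu(H_D)<n$ are correct and match the paper's Case~1. The gap is in the treatment of the off-designed bundles.

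You assert that off-diagonal acceptances are required only on faces $F_1\times F_2$ with index sets $S_1\cap S_2=\emptyset$, and hence that any off-designed bundle is accepted only at a partition $P$ whose support sets are disjoint. This is false for the scoring rule $P^1_j+P^2_j\ge 2/n$. Take $n=3$, $P^1=(\epsilon,1-\epsilon,0)$, $P^2=(\epsilon,0,1-\epsilon)$: here $S_1=\{1,2\}$, $S_2=\{1,3\}$, so $S_1\cap S_2=\{1\}\neq\emptyset$, yet the only diagonal vertex of $\supp(P)$ is $(1,1)$ with score $2\epsilon<2/3$, while the global max-sum diagonal $(2,2)$ has $P^2_2=0$ and so violates hungriness. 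Thus an off-diagonal bundle \emph{must} be offered at this $P$, and your degeneracy bound $\min(|S_1|,|S_2|)\le\lfloor n/2\rfloor$ is unavailable. The same phenomenon occurs at interior points arbitrarily close to such $P$, so it cannot be confined to a lower-dimensional set.

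Once off-diagonal bundles can appear at such partitions, you need an actual argument that a mix of $H_D$-edges and off-diagonal bundles cannot placate $n$ agents; ``refined bookkeeping'' is a promise, not a proof. The paper resolves exactly this difficulty by a specific fallback set $B(\vec v,\vec w)=\{(j,k):v_j\ge \tfrac{1}{n-1},\,w_k\ge \tfrac{1}{n-1}\}$ together with a length-sum count: any $B$-pair has length-sum at least $\tfrac{2}{n-1}$, any max-sum $A_i$-pair has length-sum at least $\tfrac{2}{n}$, and $n$ disjoint such pairs would have total length exceeding $2$. Your sketch lacks any analogue of this quantitative control, and without it the construction does not exclude size-$n$ placations.
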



\begin{proof}
We consider an instance of the  two-cake-division  problem with $2n-2$ agents, in which each cake is cut into $n$ slices. We define for each agent $i\in[2n-2]$:
\begin{align*}
C_i &:= \{(j,k) ~|~ (i,j,k) \in H_D\} 
\\
&= 
\begin{cases}
C^1 := \{(j,j) ~|~ j\in [n]\} & \text{~if~} 1\leq i\leq n-1,
\\
C^2 := \{(j,j+1) ~|~ j\in [n]\} & \text{~if~} n\leq i\leq 2n-2 ~~~ [\text{recall that~} n+1\equiv 1]
\end{cases}
\end{align*}
Given partitions $\vec{p},\vec{q}$ of the two  cakes,  we define 
\begin{align*}
B(\vec{p},\vec{q}) &:= \left\{(j,k) ~|~  p_j \geq \frac{1}{n-1},~ q_k \geq \frac{1}{n-1}\right\}.
\end{align*}
The acceptable pairs of agent $i$ are the pairs in $B(\vec{p}, \vec{q})$ and the max-sum pairs in $C_i$:
\begin{align*}
L^i(\vec{p}, \vec{q}) := 
B(\vec{p}, \vec{q})
~\cup~
\{
(j,k)\in C_i: 
p_j + q_k \geq p_{j'} + q_{k'} \text{~for all~} (j',k')\in C_i
\}
.
\end{align*}

First, we show closedness, namely, that for every $i,j,k$, the set
$P_{i,j,k} := \{(\vec{p}, \vec{q}) \mid (j,k) \in  L^i(\vec{p}, \vec{q})\}$ is closed.
If $(j,k)\not\in C_i$, then 
$P_{i,j,k}$ is the set $\{(\vec{p}, \vec{q}) \mid  
 p_j \geq \frac{1}{n-1},~ q_k \geq \frac{1}{n-1}\}$,
 which is closed since it is the intersection of the partition polytope with two closed hyperspaces defined by 
 $p_j \geq \frac{1}{n-1}$ and $q_k \geq \frac{1}{n-1}$.
 If $(j,k)\in C_i$, then $P_{i,j,k}$ is the union of the above set with 
 $\{(\vec{p}, \vec{q}) \mid  
 p_j + q_k \geq p_{j'}+q_{k'}  \text{~for all~} (j',k')\in A_i \}$, which is again defined by intersection of closed hyperspaces.
\HiddenExplanation{
	This argument works if we replace max-sum with max-min, as long as the max-min is taken over $A_i$ only, namaely:
	\begin{align*}
	L^i(\vec{p}, \vec{q}) := 
	B(\vec{p}, \vec{q})
	~\cup~
	\{
	(j,k)\in A_i: 
	\min(p_j , q_k) \geq \min(p_{j'} , q_{k'}) \text{~for all~} (j',k')\in A_i
	\}
	.
	\end{align*}	
}
 
Next, we show hungriness, namely, that for every $i,\vec{p},\vec{q}$, 
the set $L^i(\vec{p}, \vec{q})$ contains at least one pair of nonempty slices.
If $B(\vec{p}, \vec{q})\neq\emptyset$ then it obviously contains (only) pairs of nonempty slices.
If $B(\vec{p}, \vec{q})=\emptyset$, then in at least one cake, say cake 1, all slices are shorter than $1/(n-1)$.
Since their total length is $1$, all slices in that cake are nonempty.
The set $C_i$ contains $n$ pairs, and the total length-sum of all pairs is $2$. Therefore, the maximum length-sum of a pair is at least $2/n$. Since $2/n \geq 1/(n-1)$, and all slices of cake 1 are shorter than $1/(n-1)$, the slice of cake 2 in any pair maximizing the length-sum must be nonempty too.
\HiddenExplanation{
	This argument is much simpler if we replace max-sum with max-min: there is at least one nonempty slice in cake 2, say slice $k$. So the min-length of the pair $(k,k)$ or $(k-1,k)$ is positive. So the max-min is positive. So in the max-min pair, both slices are nonempty.
}

Finally, we prove that in every partition $(\vec{p}, \vec{q})$, at most $n-1$ agents can be allocated an acceptable pair of pieces.

{\bf Case 1:} every agent $i$ gets a pair $(j_i,k_i)\in C_i$.
Since $\{(i,j_i,k_i) \mid i\in [2n-2]\} \subset H_D$, the largest possible matching in this set is of size $n-1$, so at most $n-1$ agents are satisfied. 

{\bf Case 2:} at least one agent gets a pair from $B(\vec{p}, \vec{q}) \setminus C_i$.
The length-sum of this pair is at least $2/(n-1)$.
The length-sum of every max-sum pair in $C_i$ is at least $2/n$. Hence, the length-sum of every $n$ pairs is at least
$\frac{2}{n-1} + (n-1)\cdot \frac{2}{n} 
= \frac{2 n + 2(n-1)^2}{n(n-1)}
= 2 \frac{n^2-n+1}{n^2-n} > 2
$, which is a contradiction since the total length of the cakes is $2$.\end{proof}

\HiddenExplanation{
	This argument does NOT work if we replace max-sum with max-min.
	For example, suppose $n=5$ and the slice lengths are:
	\begin{align*}
	cake~1: & 1/2, 0, 0, 0, 1/2
	\\
	cake~2: & 0,   0, 1, 0, 0  
	\end{align*}	
	Then the max-min is 0, so we can give each agent $i$ any slice from $B\cup A_i$. For example:
	\begin{align*}
		Agent~1: & (1,1) & (A_1)
		\\
		Agent~2: & (2,2) & (A_2)
		\\
		Agent~5: & (3,4) & (A_5)
		\\
		Agent~6: & (4,5) & (A_6)
		\\
		Agent~7: & (5,3) & (B)
	\end{align*}	}

\begin{theorem}
\label{thm:n,n,2n-2}
For all $n\geq 2$,
$\ad(n;n,2n-2) <n$.
\end{theorem}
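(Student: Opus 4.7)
The plan is to adapt the proof of Theorem~\ref{thm:2n-2,n,n} by swapping the roles of the agent-side and the size-$(2n{-}2)$ side of Drisko's hypergraph $H_D$. Viewed as a tripartite hypergraph on sides of sizes $(n, n, 2n{-}2)$ with the first size-$n$ side indexing the $n$ agents, $H_D$ still satisfies $\nu(H_D)=n-1$: any size-$n$ matching would force the map sending each agent $j$ to its first-cake slice to be a bijection of $[n]$ with image in $\{j,j+1\}$ and hence either the identity or the cyclic shift, both of which collapse the required second-cake indices into a set of size $n-1$. For each agent $j\in[n]$ set $A_j^{(1)}:=\{(j,i):i\in[n-1]\}$, $A_j^{(2)}:=\{(j{+}1,i):i\in[n,2n{-}2]\}$ (with $j+1$ mod $n$), and $A_j:=A_j^{(1)}\cup A_j^{(2)}$; let $B(\vec v,\vec w):=\{(k,i):v_k\ge 1/(n{-}1),\ w_i\ge 1/(n{-}1)\}$; and define
\[
L^j(\vec v,\vec w):=B(\vec v,\vec w)\cup\bigl\{(k,i)\in A_j:v_k+w_i=\max_{(k',i')\in A_j}(v_{k'}+w_{i'})\bigr\}.
\]
Closedness of each $A^j_{(k,i)}$ and the hungry-agents condition are verified as in Theorem~\ref{thm:2n-2,n,n}, with $B$ providing the backup pair.

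Suppose for contradiction that all $n$ agents are placated by some partition $(\vec v,\vec w)$. If every allocated pair lies in its own $A_j$, those pairs form a size-$n$ matching in $H_D$, contradicting $\nu(H_D)=n-1$. Otherwise partition the agents into $S$ (those using $B$-pairs), $T_1$ (those using $A_j^{(1)}$-max-sum pairs), and $T_2$ (those using $A_j^{(2)}$-max-sum pairs). The key structural observation is that all $A_j^{(1)}$-max-sum pairs share the common second-cake coordinate $i_1^*:=\arg\max_{i\in[n-1]}w_i$ (independent of $j$), and similarly $i_2^*$ for $A_j^{(2)}$; matching-disjointness (together with a standard perturbation argument to handle ties) thus forces $|T_1|,|T_2|\le 1$, so $|S|\ge n-2$. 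Since $B$-pairs use distinct first-cake slices each with $v_k\ge 1/(n{-}1)$, we also have $|S|\le n-1$.

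In the tight sub-case $|T_1|=|T_2|=1$, write $\{j_1\}=T_1$ and $\{j_2\}=T_2$; the $|S|=n-2$ $B$-users occupy $K_S:=[n]\setminus\{j_1,j_2{+}1\}$, so $\sum_{k\in K_S}v_k\ge (n-2)/(n-1)$, forcing $v_{j_1}+v_{j_2+1}\le 1/(n{-}1)$. On the other hand, the max-sum defining inequalities
\[
v_{j_1}+w_{i_1^*}\ge v_{j_1+1}+w_{i_2^*},\qquad v_{j_2+1}+w_{i_2^*}\ge v_{j_2}+w_{i_1^*}
\]
add to $v_{j_1}+v_{j_2+1}\ge v_{j_1+1}+v_{j_2}$; since $j_1\ne j_2$, neither $j_1{+}1$ nor $j_2$ equals $j_1$ or $j_2{+}1$, so both lie in $K_S$ with $v_{j_1+1},v_{j_2}\ge 1/(n{-}1)$, yielding $v_{j_1}+v_{j_2+1}\ge 2/(n{-}1)$ --- a contradiction. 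The remaining sub-cases are handled similarly: $|S|=n$ is ruled out by $\sum v_k\ge n/(n{-}1)>1$, and for $|S|=n-1$ with (say) $|T_2|=1$ the tight equalities force $v_{j_2+1}=0$ and $v_{j_2}=1/(n{-}1)$, and the max-sum defining inequality for $j_2$ then yields $w_{i_2^*}\ge 1/(n{-}1)$, so the total length-sum of the $n$ allocated pairs is at least $2+w_{i_2^*}>2$, contradicting $\sum_j(v_{k_j}+w_{i_j})\le\sum_k v_k+\sum_i w_i\le 2$. The principal obstacle will be the careful bookkeeping that combines the $B$-structure constraints with the max-sum defining inequalities in the tight sub-case; no genuinely new idea beyond those already present in Theorem~\ref{thm:2n-2,n,n} is required.
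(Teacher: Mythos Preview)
Your construction fails the hungry-agents condition. The appeal to ``as in Theorem~\ref{thm:2n-2,n,n}'' overlooks the asymmetry between the two cakes: there both cakes have $n$ slices, so $B=\emptyset$ forces some cake to have all slices strictly below $1/(n-1)$, hence all positive. Here cake~2 has $2n-2$ slices, so it is perfectly possible that all $w_i<1/(n-1)$ while some $w_i=0$ and some $v_k=0$. Concretely, take $n=4$, $\vec v=(0,\,0.1,\,0.1,\,0.8)$ and $\vec w=(0.3,\,0.3,\,0.3,\,0.1,\,0,\,0)$. Then every $w_i<1/3$, so your $B(\vec v,\vec w)=\emptyset$; meanwhile agent~$1$'s max-sum over $A_1$ is $v_1+0.3=0.3$ (beating $v_2+0.1=0.2$), attained only at pairs $(1,i)$ with $v_1=0$. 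Thus $L^1(\vec v,\vec w)$ contains no pair of nonempty slices.

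This is not merely a cosmetic defect: once you relax $B$ to restore hungriness (e.g.\ drop the constraint $w_i\ge 1/(n-1)$, as the paper does), the $S$-agents no longer have $w_{i_j}\ge 1/(n-1)$, and your sub-case $|S|=n-1$ argument, which needs $\sum_{j\in S}w_{i_j}\ge 1$, collapses. The paper's proof addresses this by taking from $B$ only the single pair maximizing $v_j$ and $w_k$, and then replacing your counting argument by a structural analysis of ``maximal small-slice sequences'' in cake~1 together with a comparison of $w_{i_1^*}$ and $w_{i_2^*}$; this is a genuinely different Case-2 analysis, not a small patch. (Separately, your ``standard perturbation'' for ties needs to perturb the \emph{preferences}, not the partition, and you should check it preserves closedness and hungriness---but this is fixable and secondary to the main gap.)
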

Note that this is different than Theorem \ref{thm:2n-2,n,n}, since the role of the first argument is different from that of the second and third arguments.

\begin{proof}
We consider a cake-cutting instance with $n$ agents, in which cake \#1 is cut into $n$ slices and cake \#2 is cut into $2n-2$ slices.
We define for each agent $i\in[n]$:
\begin{align*}
C_i  &:= 
\{
(j,k) ~|~ (k,i,j) \in H_D
\}
\hskip 5mm
\text{\small(note the difference from $A_i$ in Theorem \ref{thm:2n-2,n,n})}
\\
&=
\{(i,k) ~|~ 1\leq k \leq n-1 \} 
 \cup
\{(i+1,k) ~|~ n \leq k \leq 2n-2 \} 
\\
&[\text{recall that~} n+1\equiv 1]
\end{align*}

Given partitions $(\vec{p}, \vec{q})$, 
we define 
\begin{align*}
B(\vec{p},\vec{q}) := \{(j,k) ~|~ 
p_{j} \geq \frac{1}{n-1},~
1\leq k \leq 2n-2 \}
\end{align*}
The acceptable pairs of agent $i$ are:
\begin{align*}
L^i(\vec{p}, \vec{q}) := 
&\{
(j,k)\in C_i: 
p_j + q_k \geq p_{j'} + q_{k'} \text{~for all~} (j',k')\in C_i
\}
\\
\cup
&\{
(j,k)\in B(\vec{p}, \vec{q}): 
p_j \geq p_{j'} ,
q_k \geq q_{k'} 
\text{~for all~} (j',k')\in B(\vec{p}, \vec{q})
\}.
\end{align*}

For the proof, we assume without loss of generality  that 
\begin{align*}
q_1 = \max_{1\leq k\leq n-1}q_k
&&
q_n = \max_{n\leq k\leq 2n-2}q_k
\end{align*}
so for all pairs in $L^i(\vec{p}, \vec{q})$, the length of the second slice is either $q_1$ or $q_n$ (for pairs in $B(\vec{p}, \vec{q})$ it is $\max(q_1,q_n)$).

Closedness of the sets $P_{i,j,k}$ can be proved similarly to Theorem \ref{thm:2n-2,n,n}:
If $(j,k)\not\in C_i$, then 
$P_{i,j,k}$ is the set $\{(\vec{p}, \vec{q}) \mid  
 p_j \geq \frac{1}{n-1},~ 
 p_j \geq p_{j'} \text{~for all~} 1\leq j'\leq n,
 q_k \geq q_{k'} \text{~for all~} 1\leq k'\leq 2n-2
 \}$,
 which is closed since it is the intersection of the partition polytope with closed hyperspaces.
 If $(j,k)\in C_i$, then $P_{i,j,k}$ is the union of the above set with 
 $\{(\vec{p}, \vec{q}) \mid  
 p_j + q_k \geq p_{j'}+q_{k'}  \text{~for all~} (j',k')\in C_i \}$, which is again defined by intersection of closed hyperspaces.

Next, we show hungriness.
If $B(\vec{p}, \vec{q})\neq\emptyset$,
then at least one of these pairs, 
with longest slices in both cakes, is in $L^i$, 
and both  slices in this pair are nonempty.
If $B(\vec{p}, \vec{q})= \emptyset$,
then in cake 1 all slices are shorter than $1/(n-1)$.
Since their total length is $1$, all slices in cake 1 are nonempty.
For every max-sum pair $(j,k)\in C_i$, 
$q_k$ equals either $q_1$ or $q_n$. 
If both these lengths are positive then we are done.
If $q_1=0$, then 
$q_2=\ldots=q_{n-1}=0$ too,
so $q_n\geq 1/(n-1)$.
Similarly, if $q_n=0$ then $q_1\geq 1/(n-1)$. In both cases, the max-sum is at least $1/(n-1)$. Since all slices of cake 1 are shorter than $1/(n-1)$, this max-sum can be attained only with a nonempty slice of cake 2.
\HiddenExplanation{
	The hungriness argument is much simpler if we replace max-sum with max-min.
	It works also if we maximize a weighted sum $p_j + r\cdot q_k$, but only if $r\geq 1$.
}

Finally, we show that in every partition $(\vec{p}, \vec{q})$, at most $n-1$ agents can be allocated an acceptable pair. 

{\bf Case 1:} every agent $i$ gets a pair $(j_i,k_i)\in C_i$.
Since  $\{(i,j_i,k_i) \mid i\in [2n-2]\} \subset H_D$, the maximum matching in this set is of size $n-1$, so at most $n-1$ agents are satisfied. 

{\bf Case 2:} at least one agent $i$ gets a pair $(j_i,k_i)\in B(\vec{p}, \vec{q}) \setminus C_i$.
So $p_{j_i}\geq \frac{1}{n-1}$ and $p_{j_i} = \max_{j} p_{j}$ and $q_{k_i} = \max(q_1,q_n)$. Cake 1 must have both large slices (with length at least $\frac{1}{n-1}$) and small slices (with length less than $\frac{1}{n-1}$).
We define a \emph{maximal small-slice sequence} as a  sequence of indices $j_{start},\ldots,j_{end}$ such that $p_j < \frac{1}{n-1}$ for all $j_{start}\leq j\leq j_{end}$ while $p_{j_{start}-1}\geq \frac{1}{n-1}$ and $p_{j_{end}+1}\geq \frac{1}{n-1}$.
Note that the indices in the sequence may be cyclic, e.g. if $n=6$ then the sequence may be $5,6,1,2$.

{\bf Subcase 2.1:} $q_n = q_1$. Then, in every maximal small-slice sequence $j_{start},\ldots,j_{end}$, 
the only agents who are willing to accept a pair with a slice from this sequence are agents $j_{start},\ldots,j_{end}-1$: agent $j_{start}-1$ won't accept slice $j_{start}$ since $p_{j_{start}}+q_n < p_{j_{start}-1}+q_1$, and agent $j_2$ won't accept slice $j_{end}$ since 
$p_{j_{end}}+q_1 < p_{j_{end}+1}+q_n$.
Therefore, at least one of the slices in this sequence remains unallocated.

{\bf Subcase 2.2:} $q_n \neq q_1$; suppose without loss of generality  that $q_1 > q_n$.
At least one agent $i$ must get a pair $(j_i,k_i)$ with $n\leq k_i\leq 2n-2$.
Among all those agents, select one for which 
$j_i$ is smallest.
The pair $(j_i, k_i)$ cannot come from $B(\vec{p}, \vec{q})$ since $q_{k_i}\leq q_n < q_1$ so it is not maximum in cake 2.
Therefore, the pair $(j_i, k_i)$ must come from $C_i$, so we must have $j_i = i+1$.
Also, the sum $p_{j_i}+q_{k_i}$ must be maximum among all pairs in $C_i$. In particular, we must have 
$p_{i+1} + q_n \geq p_{i+1} + q_{k_i} \geq p_i + q_1$.
This implies $p_{i+1} > p_i$.

We now check which agent can be allocated the piece $i$ in cake \#1.
\begin{itemize}
\item Agent $i$ could potentially accept this piece as a part of a pair $(i,k)$ for some $1\leq k\leq n-1$, but $i$ is already allocated another pair.
\item Agent $i-1$ could potentially accept this piece as a part of a pair $(i,k)$ for some $n\leq k\leq 2n-2$. But this is ruled out by the assumption that $j_i$ is smallest.
\item 
Some other agent could potentially accept this piece as a part of a pair in $B(\vec{p},\vec{q})$.
But this is ruled out since $p_i$ is not maximum in cake 1 (since $p_i < p_{i+1}$).
\end{itemize}
In all cases, at most $n-1$ slices of cake \#1 are allocated.
\HiddenExplanation{
	The proof fails if we allow to take all of $B$ (not only the longest pairs). For example, suppose $n=4$ and the slice lengths are:
	\begin{align*}
	cake~1: & 0, 0, 1/3, 2/3
	\\
	cake~2: & 1/3, 1/3, 1/3;  0, 0, 0
	\end{align*}	
	Then we can satisfy all $n$ agents as follows:
	\begin{align*}
		Agent~1: & (1,1) & (A_1: 0+1/3 \geq 0+0)
		\\
		Agent~2: & (2,2) & (A_2: 0+1/3 \geq 1/3+0) 
		\\
		Agent~3: & (4,4) & (A_3: 2/3+0 \geq 1/3+1/3)
		\\
		Agent~4: & (3,3) & (B)
	\end{align*}
}
\end{proof}


\section{Rainbow matchings in families of  $d$-intervals}

 KKM-type theorems have been applied to prove results on  matchings in $d$-interval families, see, e.g., \cite{AKZ, FZ}. But it seems that the fact that there is a simple reduction of $d$-interval matching problems to multiple cake division problems hasn't been  explicitly stated. The purpose of this section is to note this reduction. It  implies that  results on multiple cake-division, in particular those proved above, yield lower bounds on matching numbers in $d$-interval hypergraphs.

Given $d$ disjoint copies $C_1, \ldots, C_d$
of the unit interval $[0,1]$, a $d$-{\em interval} 
is the union of $d$ disjoint open intervals, one on each $C_t$ (the openness is assumed just for simplifying some arguments). 
Let $\F$ be a finite family of $d$-intervals. 
We think of $\F$ as a hypergraph whose vertex set is the uncountable set 
$C_1\cup \cdots \cup C_d$
 and whose edges are the $d$-intervals.
So a matching in $\F$ is a subset of $\F$ consisting of pairwise disjoint $d$-intervals, and a cover in $\F$ is a set of points in the vertex set intersecting all  $d$-intervals in $\F$. 

A well-known theorem of Gallai asserts that when $d=1$, the matching number and the covering number in $\F$ are the same. 
For $d\ge 2$, \citet{tardos} and \citet{kaiser} proved the following: 

\begin{theorem}[\cite{tardos,kaiser}]\label{tardos}
For all $d\ge 2$ and $m\geq 1$, any family of $d$-intervals with matching number $\leq m$ can be covered by  $d(d-1)m$ points, $(d-1)m$ on each $C_t$. 
\end{theorem}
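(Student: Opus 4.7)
The plan is to prove the covering bound by contradiction, using the KKM-style machinery developed in the paper. Suppose $\F$ has $\nu(\F)=m$ but cannot be covered by placing $(d-1)m$ points on each $C_t$. I will extract $m+1$ pairwise disjoint $d$-intervals in $\F$, contradicting $\nu(\F)=m$.

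Setup: Set $a=(d-1)m+1$, so a partition of each $C_t$ into $a$ consecutive slices uses $(d-1)m$ cut-points. Parametrise such partitions by $\cp=\prod_{t=1}^d\Delta_{a-1}$. The non-covering assumption translates into the hungry-agents condition for the cake-division instance in which each $f\in\F$ is viewed as an agent with preference list
$L^f(P)=\{\vj\in\cj:f\subseteq\prod_{t=1}^d I^t_{j_t}(P)\}$:
for every $P\in\cp$ there is some $f\in\F$ with $L^f(P)\ne\emptyset$ (otherwise the cut-points of $P$ form a cover).

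KKM step: The sets $A^f_{\vj}=(L^f)^{-1}(\vj)$ are closed for each $f$ and $\vj$. To fit Observation \ref{trivialobs} and Theorem \ref{thm:mainarg_general}, I need every \emph{individual} $f$ to satisfy the hungry condition. I propose to augment $L^f(P)$ by also admitting, for each $t$, the slice of $C_t$ that contains the largest portion of $f$'s $t$-th component (with arbitrary tie-breaking consistent with closedness). This augmentation preserves closedness and gives every $f$ a non-empty list at every $P$, while the KKM boundary condition (that some acceptable $\vj$ has positive coordinates on the current face) follows from the fact that on any face $\sigma$ of $\cp$ at least one slice in each cake survives.

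Extraction: Apply Theorem \ref{thm:mainarg_general} with $n=|\F|$ agents and side sizes $(n,a,\ldots,a)$. This yields an $(n,a,\ldots,a)$-fractionally balanced $(d+1)$-partite hypergraph $H$. F\"uredi's bound (Theorem \ref{thm:furedi}) gives
\[
\nu(H)\ge\left\lceil\frac{\min(n,a)}{d}\right\rceil\ge\left\lceil\frac{a}{d-1}\right\rceil= m+1
\]
(assuming $n\ge a$, which may be arranged by replacing $\F$ by sufficiently many perturbed copies if needed). A matching of size $m+1$ in $H$ produces, at a common partition $P^*$, a set of $m+1$ distinct $d$-intervals of $\F$ allocated to pairwise coordinate-disjoint slice $d$-tuples, hence pairwise disjoint as $d$-intervals.

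The main obstacle is the augmentation step. After enlarging $L^f$, an edge of the resulting matching encodes $f$ being assigned to a slice-tuple that it may only \emph{mostly} occupy rather than being fully contained in. To turn the matching in $H$ into a genuine matching in $\F$, one has to argue that at the common partition $P^*$ produced by the Komiya-type argument, each selected $f$ is actually contained in its cell (so that the original, non-augmented $L^f$ is non-empty there). A natural route is to combine the augmentation with a limiting/perturbation argument, replacing the open $d$-intervals by slightly smaller closed versions and taking a limit. A second, subordinate obstacle is bootstrapping from $\nu(H)\ge\lceil a/(d-1)\rceil$ to exactly $m+1$ when $d-1$ does not divide $a$ cleanly; here the identity $a=(d-1)m+1$ is used crucially, and the ceiling gives precisely $m+1$.
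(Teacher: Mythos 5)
Be aware first that the paper does not prove Theorem~\ref{tardos}; it cites \cite{tardos,kaiser} and, at the end of Section~7, derives only the special case $d=2$ as a sample application of Theorem~\ref{reduction}. Your attempt at the general case contains two genuine gaps, one numerical and one structural.

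The numerical gap is decisive. After the KKM/Komiya step you obtain a $(d+1)$-partite fractionally balanced hypergraph $H$ with side sizes $(n,a,\ldots,a)$, $a=(d-1)m+1$. For a $(d+1)$-partite hypergraph, Theorem~\ref{thm:furedi} gives $\nu(H)\ge\lceil \min(n,a)/d\rceil$, since the divisor is one less than the uniformity, which is now $d+1$. The chain
\[
\left\lceil\frac{\min(n,a)}{d}\right\rceil\ge\left\lceil\frac{a}{d-1}\right\rceil
\]
you invoke is simply false: the left side is at most $\lceil a/d\rceil$, and since $a=(d-1)m+1\le dm$ for all $m\ge 1$, this is at most $m$. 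You never get the $m+1$ disjoint edges you need for the contradiction. The identity $a=(d-1)m+1$ was calibrated for a $d$-partite F\"uredi bound, not a $(d+1)$-partite one.

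The structural gap is the one you flag yourself, and it is not repairable in the way you suggest. If each $f\in\F$ is its own agent, the hungry-agents condition genuinely fails --- a fixed $d$-interval can be cut by every partition in a whole face of $\cp$. Your augmentation means an edge $(f,\vj)$ of $H$ at the Komiya point $P^*$ only certifies that $f$ is ``mostly'' in $\prod_t I^t_{j_t}(P^*)$, so a matching in $H$ need not yield pairwise disjoint $d$-intervals, which is the entire point. The perturbation/limit you gesture at does not fix this, since the Komiya point $P^*$ may move discontinuously as you shrink the intervals.

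The paper's route for $d=2$ sidesteps both problems and does generalize. Take $n$ agents all sharing the same preference list built from the \emph{whole} family: agent $i$ accepts $\vj$ at $P$ iff some $f\in\F$ is contained in $\prod_t I^t_{j_t}(P)$. Then hungriness for every agent is literally the hypothesis ``$\F$ is not covered by the $(d-1)m$ cut-points of $P$ on each axis'', no augmentation needed, and a rainbow matching across these $n$ identical copies is a genuine matching in $\F$. The number $n$ is supplied by Theorem~\ref{thm:ramseyscr} applied to the $d$-partite statement $(a,\ldots,a)\rbm m+1$, where F\"uredi for $d$-partite hypergraphs really does give $\lceil a/(d-1)\rceil=m+1$; one then passes to $(n,a,\ldots,a)\rbm m+1$, to $\rad$ by Theorem~\ref{thm:mainarg_general}, and to $\rim$ by Theorem~\ref{reduction}. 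If you rewrite your argument in that shape, both the F\"uredi count and the hungry-agents condition come out right.
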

Equivalently, if $\F$ cannot be covered by $d(d-1)m$ points, $(d-1)m$ on each $C_t$, then it has a matching of size $\geq m+1$.
A rainbow version of this theorem was proved in \cite{FZ}: 
\begin{theorem}
\label{coloreddintervals}
Let $d\ge 2$. Let $\F_i,~ i\in[d(n-1)+1]$ be $d(n-1)+1$ families of $d$-intervals 
and write $\F= \bigcup_{i}
\F_i$.
If for all $i \in [d(n-1)+1]$,   $\F_i$ cannot be covered by any choice of $(n-1)d$ points, $(n-1)$ on each $C_t$, then there exists a rainbow matching $\M$  in $\F$  (i.e., $\M$ is a matching in $\F$ and $|\M\cap \F_i | \le 1$) of size $|\M| \ge \ceil{\frac{n}{d-1}}$. 
\end{theorem}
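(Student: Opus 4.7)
The plan is to reduce the rainbow $d$-interval matching problem to a multiple-cake division problem and then invoke Theorem~\ref{known-results}. Each of the $d(n-1)+1$ families $\F_i$ plays the role of a single agent; the $d$ cakes are partitioned into $n$ pieces each, and agent $i$'s acceptable slice-tuples will be exactly those that contain some $d$-interval of $\F_i$. A large placatable set of agents then translates directly into a rainbow matching of the same size.

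Concretely, write $\cp := \prod_{t=1}^d \Delta_{n-1}$ for the partition polytope. For each $i \in [d(n-1)+1]$ and each vertex $\vec j = (j_1, \ldots, j_d) \in V(\cp)$, set
\[
A^i_{\vec j} := \{P \in \cp : \exists\, J = J^1 \cup \cdots \cup J^d \in \F_i \text{ with } J^t \subseteq I^t_{j_t}(P) \text{ for every } t \in [d]\}.
\]
Since each $J^t$ is a fixed open interval and the endpoints of $I^t_{j_t}(P)$ depend continuously on $P$, each condition ``$J^t \subseteq I^t_{j_t}(P)$'' is closed; and $A^i_{\vec j}$ is a finite union (over $J \in \F_i$) of finite intersections of such closed conditions. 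For the hungriness condition in Observation~\ref{trivialobs}, note that a partition $P$ yields $(n-1)d$ cut-points, $(n-1)$ on each $C_t$; by hypothesis some $J \in \F_i$ avoids all of them, forcing each $J^t$ to lie inside a single slice $I^t_{j_t}(P)$, which necessarily has $P^t_{j_t}>0$ since it contains a nonempty open interval. Thus $(A^i_{\vec j})_{\vec j}$ is a KKM-cover of $\cp$.

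With this setup, I would apply the second bullet of Theorem~\ref{known-results} with $p := d(n-1)+1$ (which trivially divides itself): there exist a partition $P$ and a map $\phi$ placating $m \geq \lceil p/(d(d-1)) \rceil$ agents, with the images $\phi(i)$ vertices of $\cp$ having pairwise disjoint supports. Each placated agent $i$ supplies, via the definition of $A^i_{\phi(i)}$, a $d$-interval $J_i \in \F_i$ inside the slice-tuple $\phi(i)$. Since supports are disjoint, the slice-tuples themselves are componentwise distinct, so the $J_i$'s are pairwise disjoint, producing a rainbow matching of size $m$.

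The remaining task, which I expect to be the main delicate point, is the numerical inequality $\lceil p/(d(d-1))\rceil \ge n/(d-1)$. A direct computation gives $p/(d(d-1)) = n/(d-1) - 1/d$. Writing the fractional part $r$ of $n/(d-1)$ as $s/(d-1)$ with $s \in \{0, 1, \ldots, d-2\}$, either $r = 0$ (in which case both sides equal the integer $n/(d-1)$) or $r \ge 1/(d-1) > 1/d$, so $p/(d(d-1)) > \lfloor n/(d-1)\rfloor$ and the ceiling attains $\lceil n/(d-1)\rceil \ge n/(d-1)$. Together with the careful KKM verification for each family, this completes the plan.
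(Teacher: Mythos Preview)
Your proof is correct and is exactly the argument the paper itself sets up: it cites Theorem~\ref{coloreddintervals} from \cite{FZ} without reproving it, but immediately afterwards recasts it as $(d(n-1)+1;\,n,\ldots,n)\rim n/(d-1)$ and supplies Theorem~\ref{reduction} (the $\rad\Rightarrow\rim$ reduction you carry out by hand) together with Theorem~\ref{known-results} (the \cite{NSZ} bound you invoke with $p=d(n-1)+1$). Your closedness and hungriness verifications match the paper's one-line justification in the proof of Theorem~\ref{reduction}, and your ceiling computation is the only step the paper leaves entirely implicit.
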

Theorem \ref{tardos} corresponds to the special case in which $n=1+(d-1)m$.
\HiddenExplanation{
Then $(n-1)d = d(d-1)m$,
and $\ceil{n\over d-1} = m+1$.
}

Our results on $\ad$ imply further extensions of Theorem \ref{coloreddintervals}, answering questions of the form: 
Let $\F_1,\dots,\F_n$ be families of $d$-intervals, such that for every $i\in [n]$,  $\F_i$ cannot be covered by $n_t-1$ points on each $C_t$. What is the largest rainbow matching we are guaranteed to find?

\begin{notation}
\label{dintprob}
Let $\im(n; n_1, n_2, \ldots ,n_d)$ denote the largest integer $m$ such that 
any $n$ families  of $d$-intervals $\F_1,\dots,\F_n$, such that  for every $i\in [n]$ $\F_i$ cannot be covered by a choice of $n_t-1$ points on each $C_t$, has a rainbow matching of size $m$.
\end{notation}

In this notation, Theorem \ref{coloreddintervals} is
\begin{align*}
\im\left(d(n-1)+1;\underbrace{n\cdots,n}_\text{$d$ times}\right) \geq \ceil{\frac{n}{d-1}}.
\end{align*}

\begin{theorem}\label{thm:ad-implies-im}
$\im(n; n_1, \ldots ,n_d) \geq \ad(n, n_1, \ldots ,n_d)$
for all integers $d\geq 1$ and $n, n_1,\ldots,n_d\geq 1$.
\end{theorem}
\begin{proof}
Given a collection of families $\F_i$ of $d$-intervals, we construct a $d$-cake-cutting instance 
in which each agent $i$ accepts a $d$-tuple of pieces if and only if it contains a $d$-interval of $\F_i$.
Formally,
for every $(n_1,\ldots,n_d)$-partition $P$ of the cakes, 
let $C(P)$ be the set of its cut-points --- a set containing $n_t-1$ points in each cake $C_t$. By the theorem assumption, 
for every $i\in[n]$, $\cf_i$ is not covered by $C(P)$. This means that 
$\cf_i$ contains at least one $d$-interval $J_i(P)$ that is not cut by the partition. Every such $J_i(P)$ is entirely contained in some $d$-tuple of nonempty intervals from $P$, say
$(I^1_{j_1},\ldots,I^d_{j_d})$.
We let $L^i(P)$ consist of the corresponding vectors $(j_1,\ldots,j_d)$ of indices. 

The closedness of the admissibility sets follows from the fact that the $d$-intervals are open. An admissible division in the cake-cutting instance corresponds to a rainbow matching in the collection of $d$-interval families.
\end{proof}

\begin{remark}

Recall that, by \cite{NSZ},
\begin{align*}
\ad\left(d(n-1)+1;\underbrace{n\cdots,n}_\text{$d$ times}\right)
\geq
 \ceil{\frac{d(n-1) +1}{d(d-1)}} = \ceil{\frac{n}{d-1}-\frac{1}{d}}=\ceil{\frac{n}{d-1}}.
\end{align*}
Combined with Theorem \ref{thm:ad-implies-im} this implies
Theorem \ref{coloreddintervals}.
\end{remark}

As a sample application, here is a 
proof of  the case $d=2$ of Theorem \ref{tardos}, which can be stated as:
in a family of $2$-intervals, if there is no cover containing $2m$ points ($m$ points in each $C_t$), then there is a matching of size $m+1$.

By Theorem \ref{thm:ramseyscr}, since 
$\bm(m+1,m+1)\geq m+1$,  there exists a finite $n$ such that $\bm(n,m+1,m+1) \geq m+1$.%
\footnote{In fact, by Corollary \ref{summarybm} (1),     $n = (m+1)(m+1/2)$ suffices,
but we prefer using Theorem \ref{thm:ramseyscr} because it is a general tool for  proving non-rainbow results from rainbow results.}
By Theorem \ref{thm:bm-implies-ad}, 
$\ad(n;m+1,m+1) \geq m+1$. 
By Theorem 
\ref{thm:ad-implies-im}, $\im(n;m+1,m+1) \geq m+1$.
This holds, in particular, for $n$ families that are all identical to $F$.
So there exists a rainbow matching of size $m+1$ in the union of these $n$  families, which is a matching  of size $m+1$ in $F$.

A similar argument also deduces Theorem \ref{tardos} for general $d$ from F\'{u}redi's theorem --- this is effectively the same argument as in Section 6 of \cite{AKZ}.

Possibly the reduction is only one way. It may well be that the bounds obtained this way are not optimal, namely that $d$-intervals behave better than division of $d$ cakes.
But so far we do not have an example for this. On the contrary, just like $\ad(2;2,2) < 2$, we have
\begin{proposition}
$\im(2; 2, 2)< 2$.
\end{proposition}
\begin{proof}
For simplicity, we scale both cakes to $[0,5]$.
A 2-interval $(x_1,x_2),(y_1,y_2)$ can be visualized as an axes-parallel rectangle $(x_1,x_2)\times (y_1,y_2)$ in the square $[0,5]\times[0,5]$.
We define two families of 2-intervals. Each family contains six 2-intervals, of which two are large (4-by-4) and four are small (1-by-2).

$F_1$ contains 
$\{ (0,4)\times (0,4), (1,5)\times (1,5), (4,5)\times (0,2), (3,5)\times (0,1), (0,2)\times (4,5), (0,1)\times (3,5) \}$;
$F_2$ contains 
$\{ (0,4)\times (1,5), (1,5)\times (0,4), (4,5)\times (3,5), (3,5)\times (4,5), (0,2)\times (0,1), (0,1)\times (0,2) \}$.
In the illustration below, each rectangle is filled with a light color; intersection of two rectangles is filled with darker colors.

\begin{center}
\newcommand{\fa}[2]{
\fill[blue!50!white,draw=blue,fill opacity=0.2] (#1) rectangle (#2);
}
\newcommand{\fb}[2]{
\fill[red!50!white,draw=red,fill opacity=0.2] (#1) rectangle (#2);
}
\begin{tikzpicture}
\node at (2.5,5.5) {$F_1$:};
\fa{0.05,0.05}{3.95,3.95}
\fa{1.05,1.05}{4.95,4.95}
\fa{4.05,0.05}{4.95,1.95}
\fa{3.05,0.05}{4.95,0.95}
\fa{0.05,4.05}{1.95,4.95}
\fa{0.05,3.05}{0.95,4.95}
\end{tikzpicture}
\hskip 1cm
\begin{tikzpicture}
\node at (2.5,5.5) {$F_2$:};
\fb{0.05,1.05}{3.95,4.95}
\fb{1.05,0.05}{4.95,3.95}
\fb{4.05,3.05}{4.95,4.95}
\fb{3.05,4.05}{4.95,4.95}
\fb{0.05,0.05}{1.95,0.95}
\fb{0.05,0.05}{0.95,1.95}
\end{tikzpicture}
\end{center}

A \emph{cover} of a family by 1+1 points corresponds to two lines, one horizontal and one vertical, that together intersect all six rectangles in the family. 
$F_1$ does not have such a cover:
\begin{itemize}
\item If the vertical line has $x\in (0,1)$ then it does not intersect the rectangles $(1,5)\times(1,5)$ and $(3,5)\times(0,1)$;
\item If the vertical line has $x\in (1,4)$ then it does not intersect the rectangles $(0,1)\times(3,5)$ and $(4,5)\times(0,2)$;
\item If the vertical line has $x\in (4,5)$ then it does not intersect the rectangles $(0,2)\times(4,5)$ and $(0,4)\times(0,4)$.
\end{itemize}
In all three cases, there remain two rectangles that cannot be intersected by a single horizontal line. 
By analogous arguments, $F_2$ does not have a 1+1 cover.

A \emph{rainbow matching} corresponds to a selection of two rectangles, one from $F_1$ and one from $F_2$, such that their projections on both the $x$ and $y$ axes do not intersect. Here no such matching exists: each large 2-interval has a total length of $4+4=8$, while each small 2-interval has a total length of $1+2=3$, so two such 2-intervals have a total length larger than $10$. 
The four small 2-intervals from each family form a blown-up Pasch hypergraph, in which the largest matching is of size $1$.
\end{proof}

\section{Monotonicity}\label{sec:monotonicity}
The following proposition shows that
results on admissible division of cakes are monotone in the sense that 
$\ad(n'_0;n'_1, \ldots ,n'_d) \geq
\ad(n_0;n_1, \ldots ,n_d)$ whenever $n'_i \ge n_i$ for all $i \in [d]$.

\begin{proposition}
\label{prop:monotonicity-ad}
if $n'_i \ge n_i$ for all $i \in [d]$,
then
$\ad(n'_0;n'_1, \ldots ,n'_d) \geq
\ad(n_0;n_1, \ldots ,n_d)$.
\end{proposition}
\begin{proof}
Let $m := \ad(n_0;n_1, \ldots ,n_d)$.
If $n'_0 > n_0$, then 
one can just ignore $n'_0 - n_0$ arbitrary agents and allocate to the remaining $n_0$ agents. 
Using symmetry and induction, it is sufficient to prove that 
$\ad(n_0;n_1, \ldots ,n_d+1)\geq m$.

Intuitively, having the option to make an additional piece cannot hurt, since we can always make the additional piece empty. 
Formally,
let $\{p_i:i \in [n_0]\}$ be a set of agents, each with a choice function defined on all $(n_1, \dots, n_d+1)$-partitions of $d$ cakes. Define a set $\{q_i:i \in [n_0]\} $ of agents with choice functions on all $(n_1, \dots, n_d)$-partitions as follows.

For each $(n_1, \dots, n_d)$-partition 
$Q=((Q^1_1,\dots,Q^1_{n_1}),\dots, (Q^d_1,\dots,Q^d_{n_d}))$,
define an $(n_1, \dots, n_d+1)$-partition 
$P(Q)=((Q^1_1,\dots,Q^1_{n_1}),\dots, (Q^d_1,\dots,Q^d_{n_d},0))$
(so the $(n_d+1)$-st piece in cake $d$ is empty).


For each $i$, the choices of agent $q_i$ in $Q$ are determined by the choices of agent $p_i$ in $P$:
agent $q_i$ accepts in $Q$ the $k$-tuple of slices with indices  $(j_1,\ldots,j_d)$ 
iff $j_d\leq n_d$ and 
agent $p_i$ accepts in $P$ the $d$-tuple of slices with indices  $(j_1,\ldots,j_d)$.
By the hungry agents assumption, and since slice $n_d+1$ is empty in $P$,
agent $p_i$ must accept in $P$ at least one $d$-tuple with $j_d\leq n_d$.
The nonemptiness and continuity conditions for the choice functions of the $q_i$'s follow from those of the $p_i$'s. So by assumption, the $\{q_i\}$ have an admissible division for $m$ of the agents. It directly corresponds to an admissible division for $m$ of the $\{p_i\}'s$.
\end{proof}



We suspect that the  corresponding monotonicity property for $\bm$ is false. The smallest open case is (6,6,5):

\begin{question}
Is it true that $\bm(6,6,5)=4$?
\end{question}

This will refute monotonicity of $\bm$, since $\bm(6,6,6)=3$ by Theorem \ref{thm:nnn}.

\medskip

\section*{Acknowledgments}
We are grateful to Fr\'ed\'eric Meunier and Dmitry Falikman for helpful and instructive discussions.
We also thank users Trebor, Tortar and Piquito from math stackexchange%
\footnote{https://math.stackexchange.com/q/3811677/29780}
for their technical help. 
We are grateful to the anonymous referees of Combinatorica for their detailed comments on the initial version of the paper.


\appendix
\section{Notation summary}

\begin{table}[h]
\begin{tabular}{|c|c||c|c|}
\hline
$n$ & Num. of agents & $i$ & Index of an agent; $i\in[n]$. \\
\hline
$d$ & Num. of cakes / intervals & $t$ & Index of a cake / interval; $t\in[d]$. \\
\hline
$n_t$ & Num. of pieces of cake $t$  & $j_t$ & Piece-index in cake $t$; $j_t\in[n_t]$. \\
\hline
$\cp$
& $=\prod_{t =1}^{ d}\Delta_{n_t-1}$; set of $d$-cake partitions.
& $\cj$ & $=\prod_{t = 1}^d[n_t]$; set of piece-index vectors   \\
\hline
$P^t$
& partition of cake $t$; $P^t\in\Delta_{n_t-1}$ 
& $P$
& partition of $d$ cakes;
$P\in \cp$
\\
\hline
$C_t$ & cake $t$ / interval $t$ ($t\in[d]$) & 
$I^t_{j}(P)$
& 
Interval $j$ in partition $P$ of cake $t$
 \\
\hline
\bm & Matching in frac. balanced hyp. & 
\im & Matching in interval hyp.
\\
\hline
\ad & Admissible cake-division & 
&
 \\
\hline
%
%
\end{tabular}
\end{table}

The following implications hold:

\begin{center}
\begin{tikzpicture}[scale=1]
\node (bm)  at (0,0) {$\bm$};
\node (ad)  at (2,0) {$\ad$};
\node (im)  at (4,0) {$\im$};

\draw [ultra thick, -Triangle] (bm) edge (ad); 
\draw [ultra thick, -Bar]      (ad) edge (bm); 


\draw [ultra thick, -Triangle] (ad) edge (im); 
\end{tikzpicture}
\end{center}
The  $\mapsto$ means that we know that the opposite implication does not hold.
The $\to$ means that it is open whether the opposite implication holds.

\newpage

\bibliographystyle{plainnat}
\bibliography{shira,erel,joe}

\begin{thebibliography}{27}
\providecommand{\natexlab}[1]{#1}
\providecommand{\url}[1]{\texttt{#1}}
\expandafter\ifx\csname urlstyle\endcsname\relax
  \providecommand{\doi}[1]{doi: #1}\else
  \providecommand{\doi}{doi: \begingroup \urlstyle{rm}\Url}\fi

\bibitem[Adamaszek and Barmak(2011)]{adba}
Michal Adamaszek and Jonathan~Ariel Barmak.
\newblock On a lower bound for the connectivity of the independence complex of
  a graph.
\newblock \emph{Discrete Math.}, 311\penalty0 (21):\penalty0 2566--2569, 2011.

\bibitem[Aharoni and Berger(2006)]{aberger}
Ron Aharoni and Eli Berger.
\newblock The intersection of a matroid and a simplicial complex.
\newblock \emph{Transactions of the American Mathematical Society},
  358\penalty0 (11):\penalty0 4895--4917, 2006.

\bibitem[Aharoni and Haxell(2000)]{ah}
Ron Aharoni and Penny Haxell.
\newblock Hall's theorem for hypergraphs.
\newblock \emph{J. Graph Theory}, 35\penalty0 (2):\penalty0 83--88, 2000.

\bibitem[Aharoni et~al.(2007)Aharoni, Berger, and Ziv]{abz}
Ron Aharoni, Eli Berger, and Ran Ziv.
\newblock Independent systems of representatives in weighted graphs.
\newblock \emph{Combinatorica}, 27\penalty0 (3):\penalty0 253--267, 2007.

\bibitem[Aharoni et~al.(2017)Aharoni, Kaiser, and Zerbib]{AKZ}
Ron Aharoni, Tomas Kaiser, and Shira Zerbib.
\newblock Fractional covers and matchings in families of weighted d-intervals.
\newblock \emph{Combinatorica}, 37:\penalty0 555--572, 2017.

\bibitem[Alon and Berman(1986)]{alonberman}
Noga Alon and Kenneth~A. Berman.
\newblock Regular hypergraphs, gordan’s lemma, steinitz’ lemma and
  invariant theory.
\newblock \emph{J. Combin. Theory, Ser. A}, 43\penalty0 (11):\penalty0 91--97,
  1986.

\bibitem[Cloutier et~al.(2010)Cloutier, Nyman, and Su]{cloutier2010two}
John Cloutier, Kathryn~L Nyman, and Francis~Edward Su.
\newblock Two-player envy-free multi-cake division.
\newblock \emph{Mathematical Social Sciences}, 59\penalty0 (1):\penalty0
  26--37, 2010.

\bibitem[Drisko(1998)]{drisko}
Arthur~A. Drisko.
\newblock Transversals in row-latin rectangles.
\newblock \emph{J. Combin. Theory, Ser. A}, 84\penalty0 (11):\penalty0
  181--195, 1998.

\bibitem[Frick and Zerbib(2019)]{FZ}
Florian Frick and Shira Zerbib.
\newblock Colorful coverings of polytopes and piercing numbers of colorful
  {$d$}-intervals.
\newblock \emph{Combinatorica}, 39\penalty0 (3):\penalty0 627--637, 2019.

\bibitem[F{\"u}redi(1981)]{furedi1981maximum}
Zolt{\'a}n F{\"u}redi.
\newblock Maximum degree and fractional matchings in uniform hypergraphs.
\newblock \emph{Combinatorica}, 1\penalty0 (2):\penalty0 155--162, 1981.

\bibitem[Gale(1984)]{gale}
David Gale.
\newblock Equilibrium in a discrete exchange economy with money.
\newblock \emph{Internat. J. Game Theory}, 13\penalty0 (1):\penalty0 61--64,
  1984.

\bibitem[Hatcher(2001)]{hatcher2002algebraic}
Allen Hatcher.
\newblock \emph{Algebraic topology}.
\newblock Cambridge University Press, 2001.

\bibitem[Kaiser(1997)]{kaiser}
Tomas Kaiser.
\newblock Transversals of {$d$}-intervals.
\newblock \emph{Discrete Comput. Geom.}, 18\penalty0 (2):\penalty0 195--203,
  1997.

\bibitem[Kalai and Meshulam(2005)]{km}
Gil Kalai and Roy Meshulam.
\newblock A topological colorful helly theorem.
\newblock \emph{Advances in Mathematics}, 191\penalty0 (2):\penalty0 305--311,
  2005.

\bibitem[K\H{o}nig(1916)]{konig1916graphok}
D{\'e}nes K\H{o}nig.
\newblock Graphok {\'e}s alkalmaz{\'a}suk a determin{\'a}nsok {\'e}s a halmazok
  elm{\'e}let{\'e}re.
\newblock \emph{Mathematikai {\'e}s Term{\'e}szettudom{\'a}nyi Ertesito},
  34:\penalty0 104--119, 1916.

\bibitem[Knaster et~al.(1929)Knaster, Kuratowski, and Mazurkiewicz]{kkm}
Bronislaw Knaster, Kazimierz Kuratowski, and Stefan Mazurkiewicz.
\newblock Ein beweis des fixpunktsatzes f\"r $n$-dimensionale simplexe (in
  german).
\newblock \emph{Fundamenta Mathematicae}, 14:\penalty0 132--137, 1929.

\bibitem[Komiya(1994)]{komiya}
Hidetoshi Komiya.
\newblock A simple proof of {K}-{K}-{M}-{S} theorem.
\newblock \emph{Econom. Theory}, 4\penalty0 (3):\penalty0 463--466, 1994.

\bibitem[Lebert et~al.(2013)Lebert, Meunier, and
  Carbonneaux]{Lebert2013Envyfree}
Nicolas Lebert, Fr\'{e}d\'{e}ric Meunier, and Quentin Carbonneaux.
\newblock {Envy-free two-player m-cake and three-player two-cake divisions}.
\newblock \emph{Operations Research Letters}, 41\penalty0 (6):\penalty0
  607--610, November 2013.

\bibitem[Meshulam(2003)]{me2}
Roy Meshulam.
\newblock Domination numbers and homology.
\newblock \emph{J. Combin. Theory Ser. A}, 102\penalty0 (2):\penalty0 321--330,
  2003.

\bibitem[Meunier and Su(2019)]{MS}
Fr{\'e}d{\'e}ric Meunier and Francis~Edward Su.
\newblock Multilabeled versions of sperner's and fan's lemmas and applications.
\newblock \emph{SIAM Journal on Applied Algebra and Geometry}, 3\penalty0
  (3):\penalty0 391--411, 2019.

\bibitem[Nyman et~al.(2020)Nyman, Su, and Zerbib]{NSZ}
Kathryn Nyman, Francis~Edward Su, and Shira Zerbib.
\newblock Fair division with multiple pieces.
\newblock \emph{Discrete Applied Mathematics}, 283:\penalty0 115--122, 2020.

\bibitem[Stromquist(1980)]{Stromquist80}
Walter Stromquist.
\newblock How to cut a cake fairly.
\newblock \emph{American Mathematical Monthly}, 87\penalty0 (8):\penalty0
  640--644, 1980.

\bibitem[Su(1999)]{Su99}
Francis~Edward Su.
\newblock Rental harmony: Sperner's lemma in fair division.
\newblock \emph{American Mathematical Monthly}, 106\penalty0 (10):\penalty0
  930--942, 1999.

\bibitem[Tardos(1995)]{tardos}
G\'{a}bor Tardos.
\newblock Transversals of {$2$}-intervals, a topological approach.
\newblock \emph{Combinatorica}, 15\penalty0 (1):\penalty0 123--134, 1995.

\bibitem[van~der Laan et~al.(1999)van~der Laan, Talman, and Yang]{Zaifu1}
Gerard van~der Laan, Dolf Talman, and Zaifu Yang.
\newblock Intersection theorems on polytopes.
\newblock \emph{Math. Program.}, 84\penalty0 (1, Ser. A):\penalty0 25--38,
  1999.

\bibitem[van~der Laan et~al.(2001)van~der Laan, Talman, and Yang]{Zaifu2}
Gerard van~der Laan, Dolf Talman, and Zaifu Yang.
\newblock Existence of balanced simplices on polytopes.
\newblock \emph{J. Combin. Theory Ser. A}, 96\penalty0 (2):\penalty0 288--302,
  2001.

\bibitem[Woodall(1980)]{Woodall1980Dividing}
Douglas~R. Woodall.
\newblock {Dividing a cake fairly}.
\newblock \emph{Journal of Mathematical Analysis and Applications}, 78\penalty0
  (1):\penalty0 233--247, 1980.

\end{thebibliography}

\end{document}